\newtheorem{theorem}{Theorem}[section]
\newtheorem{lemma}[theorem]{Lemma}
\newtheorem{proposition}[theorem]{Proposition}
\newtheorem{corollary}[theorem]{Corollary}
\theoremstyle{definition}
\newcommand{\defn}[1]{{\em #1}}
\theoremstyle{remark}
\newtheorem{remark}[theorem]{Remark}
\title{Non-commutative association schemes and their fusion association schemes} 
\date{
\today
}
\author{
 Hadi Kharaghani\thanks{Department of Mathematics and Computer Science, University of Lethbridge,
Lethbridge, Alberta, T1K 3M4, Canada. \texttt{kharaghani@uleth.ca}} 
\and  
 Sho Suda\thanks{Department of Mathematics Education,  Aichi University of Education, Kariya, Aichi 448-8542, Japan. \texttt{suda@auecc.aichi-edu.ac.jp}}
}
\begin{document}

\maketitle

\abstract{
We 
 give a sufficient condition for a non-commutative association scheme to have a fusion association scheme, and construct non-commutative association schemes from symmetric balanced generalized weighing matrices and generalized Hadamard matrices. 
We then apply the criterion to these non-commutative association schemes to obtain symmetric fusion association schemes.  
}

\section{Introduction}
Association schemes are considered as an abstraction of the centralizer of transitive permutation groups, and can be described as the subalgebra of the matrix algebra generated by the disjoint $(0,1)$-matrices which are closed under the transposition and their sum equals to the all-ones matrix \cite{B}, \cite{Z}.   
Much of interest is focused on the case of multiplicity free transitive permutation groups. In such cases, the corresponding association schemes are commutative.  
In the present paper, we consider non-commutative association schemes obtained from some combinatorial objects such as symmetric balanced generalized weighing matrices and generalized Hadamard matrices. 

Kharaghani and Torabi \cite{KT} showed that for any prime power $q$, the edge set of complete graph $K_{q^3+q^2+q+1}$ is decomposed into $q+1$ strongly regular graphs sharing $q^2+1$ disjoint cliques. 
The decomposition is based on symmetric balanced generalized weighing matrices $BGW(q^2+1,q^2,q^2-1)$ with zero diagonal entries  over a cyclic group of order $q+1$, see \cite{GM} for details. Motivated by this decomposition, Klin, Reichard and Woldar \cite{KRW} defined the concept of Siamese objects as a partition of the edge set of the complete graph, and studied it from the view point of graph theory and group theory. 
In particular, it was shown that an action of $PGL(2,q)$ yields a non-commutative association scheme. 

In this paper it is shown that non-commutative association schemes are obtained from the following objects:
\begin{itemize}
\item any symmetric balanced generalized weighing matrix $BGW(n+1,n,n-1)$ with zero diagonal entries over a cyclic group $C_m$ of order $m$,
\item a generalized Hadamard matrix attached to finite fields. 
\end{itemize}
Our first example of non-commutative association scheme is obtained from the $BGW(n+1,n,n-1)$ over the cyclic group $C_m$ with $(n,m)=(q^2,q+1)$,  includes Kharaghani and Torabi's work, and has the  parameters  of the non-commutative association scheme obtained by Klin, Reichard and Woldar. 
We also establish a sufficient condition for a non-commutative association scheme to possess a fusion association scheme, and thus obtain an analog of a part of result by Bannai \cite{B} and Muzychuk \cite{M}. By applying this criterion to our non-commutative association scheme we obtain some symmetric fusion association schemes. 
Finally, the Wedderburn decomposition (or character table) and the eigenmatrices of the association schemes are explicitly determined.
  
\section{Preliminaries}
Throughout this paper, $I_n,J_n$ denote the identity matrix of order $n$, the all-ones matrix of order $n$ respectively.

Let $d$ be a positive integer. 
Let $X$ be a finite set of size $v$ and $R_i$ ($i\in\{0,1,\ldots,d\}$) be a nonempty subset of $X\times X$. 
The \emph{adjacency matrix} $A_i$ of the graph with vertex set $X$ and edge set $R_i$ is a $v\times v$ $(0,1)$-matrix with rows and columns indexed by $X$ such that $(A_i)_{xy}=1$ if $(x,y)\in R_i$ and $(A_i)_{xy}=0$ otherwise. 
An \emph{association scheme} of $d$-class is a pair $(X,\{R_i\}_{i=0}^d)$ satisfying the following:
\begin{enumerate}[(i)]
\item $A_0=I_{v}$.
\item $\sum_{i=0}^d A_i = J_{v}$.
\item $A_i^\top \in\{A_1,\ldots,A_d\}$ for any $i\in\{1,\ldots,d\}$.
\item For all $i$ and $j$, $A_i A_j$ is a linear combination of $A_0,A_1,\ldots,A_d$.
\end{enumerate}
We also refer to the set of non-zero $(0,1)$-matrices satisfying (i)--(iv) as an association scheme. 
An association scheme is \emph{symmetric} if $A_i^\top=A_i$ holds for any $i$. 
An association scheme is \emph{commutative} if $A_iA_j=A_jA_i$ holds for any $i,j$, \emph{non-commutative} otherwise. 
Non-commutative association schemes are also known as \emph{homogeneous coherent configurations} \cite{H}. 
Note that symmetric association schemes are commutative association schemes by (iv). 
For a symmetric association scheme $(X,\{R_i\}_{i=0}^2)$ of class $2$,  the graph with adjacency matrix $A_i$ for $i\in\{1,2\}$ is said to be a \emph{strongly regular graph}.

Let $\{A_0,A_1,\ldots,A_d\}$ be a non-commutative association scheme.  
The vector space over $\mathbb{C}$ spanned by the $A_i$'s forms a non-commutative algebra, denoted by $\mathcal{A}$ and called the \emph{adjacency algebra}.
Since the algebra $\mathcal{A}$ is semisimple, the adjacency algebra is isomorphic to $\oplus_{k=1}^n\text{Mat}_{d_k}(\mathbb{C})$ for uniquely determined positive integers $n,d_1,\ldots,d_n$. 
We write $\mathcal{I}=\{(i,j,k)\in\mathbb{N}^3 \mid 1\leq i,j\leq d_k, 1\leq k\leq n\}$ where $\mathbb{N}$ denotes the set of positive integers, and call the set $\mathcal{I}$ the \emph{index set of a dual basis}. 

For $k\in\{1,\ldots,n\}$, let $\varphi_k$ be an irreducible representation from $\mathcal{A}$ to $\text{Mat}_{d_k}(\mathbb{C})$. 
Despite the fact that the entries of the image of irreducible representations are not uniquely determined,  the \defn{character table} defined below is uniquely determined. 
The character table $T$ is defined to be an $n\times (d+1)$ matrix with $(k,i)$ entry equal to $\mathrm{tr}(\varphi_k(A_i))$, where $\mathrm{tr}$ denotes the trace.     

The following is due to Higman \cite{H75}.  
Let $E_{i,j}^{(k)}$ ($(i,j,k)\in\mathcal{I}$) be a basis of $\mathcal{A}$ such that $\varphi_k(E_{i,j}^{(k)})\in\text{Mat}_{d_k}(\mathbb{C})$, $E_{i,j}^{(k)}E_{i',j'}^{(k')}=\delta_{k,k'}\delta_{j,i'}E_{i,j'}^{(k)}$ and ${E_{i,j}^{(k)}}^*=E_{j,i}^{(k)}$, where $*$ denotes the transpose conjugate.   
Since $A_l$ ($l\in\{0,1,\ldots,d\}$) and $E_{i,j}^{(k)}$ ($(i,j,k)\in\mathcal{I}$) are bases of the adjacency algebra, there exist complex numbers $p_{(i,j),l}^{(k)}$ and $q_{l,(i,j)}^{(k)}$ such that 
\begin{align*}
A_l=\sum_{(i,j,k)\in\mathcal{I}} p_{(i,j),l}^{(k)}E_{i,j}^{(k)},\quad
E_{i,j}^{(k)}=\frac{1}{v}\sum_{l=0}^{d} q_{l,(i,j)}^{(k)}A_l.
\end{align*}
For $k\in\{1,\ldots,n\}$, set $P_k=(p_{(i,j),l}^{(k)})$ and $Q_k=(q_{l,(i,j)}^{(k)})$, where $(i,j)$ runs over $\{(a,b)\mid 1\leq a,b\leq d_k\}$ and $l$ runs over $\{0,1,\ldots,d\}$. 
Note that the ordering of indices of rows of $P_k$ and columns of $Q_k$ is the lexicographical order. 
We then define  $(d+1)\times (d+1)$ matrices $P$ and $Q$ by 
\begin{align*}
P=\begin{pmatrix}
P_1\\
P_2\\
\vdots\\
P_n
\end{pmatrix}, \quad 
Q=\begin{pmatrix}
Q_1& Q_2& \cdots & Q_n
\end{pmatrix}.
\end{align*}
In order to derive the character table from the matrix $Q$, we prepare the following lemma. 
\begin{lemma}\label{lem:m}
Let $k\in\{1,\ldots,n\}$. 
\begin{enumerate}
\item For any $i,j,i',j'\in\{1,\ldots,d_k\}$, $\mathrm{rank}{E_{i,j}^{(k)}}=\mathrm{rank}E_{i',j'}^{(k)}$. 
\item For any $i,j\in\{1,\ldots,d_k\}$, $\mathrm{tr}E_{i,j}^{(k)}=\delta_{i,j}\mathrm{rank}{E_{i,j}^{(k)}}$. 
\end{enumerate}
\end{lemma}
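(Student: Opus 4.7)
My plan relies entirely on the matrix-unit relations $E_{i,j}^{(k)}E_{i',j'}^{(k')}=\delta_{k,k'}\delta_{j,i'}E_{i,j'}^{(k)}$ and $(E_{i,j}^{(k)})^{*}=E_{j,i}^{(k)}$, together with standard facts about traces and orthogonal projections; no information about the underlying scheme is needed.

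For part (1), the idea is to write $E_{i',j'}^{(k)}$ as a two-sided multiple of $E_{i,j}^{(k)}$. Using the multiplication rule twice gives
\begin{equation*}
E_{i',i}^{(k)}\,E_{i,j}^{(k)}\,E_{j,j'}^{(k)}=E_{i',j}^{(k)}\,E_{j,j'}^{(k)}=E_{i',j'}^{(k)},
\end{equation*}
so $\mathrm{rank}\,E_{i',j'}^{(k)}\leq \mathrm{rank}\,E_{i,j}^{(k)}$ by the sub-multiplicativity of rank. The completely symmetric identity $E_{i,i'}^{(k)}E_{i',j'}^{(k)}E_{j',j}^{(k)}=E_{i,j}^{(k)}$ gives the reverse inequality, yielding equality.

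For part (2), I treat the diagonal and off-diagonal cases separately. When $i=j$, the element $E_{i,i}^{(k)}$ satisfies $(E_{i,i}^{(k)})^{2}=E_{i,i}^{(k)}$ (by the product rule) and $(E_{i,i}^{(k)})^{*}=E_{i,i}^{(k)}$ (by the star condition), so it is an orthogonal projection and its trace equals its rank. When $i\neq j$, use $E_{i,j}^{(k)}=E_{i,j}^{(k)}E_{j,j}^{(k)}$ together with the cyclic property of the trace:
\begin{equation*}
\mathrm{tr}\,E_{i,j}^{(k)}=\mathrm{tr}\bigl(E_{i,j}^{(k)}E_{j,j}^{(k)}\bigr)=\mathrm{tr}\bigl(E_{j,j}^{(k)}E_{i,j}^{(k)}\bigr)=\mathrm{tr}\bigl(\delta_{j,i}E_{j,j}^{(k)}\bigr)=0,
\end{equation*}
which matches $\delta_{i,j}\,\mathrm{rank}\,E_{i,j}^{(k)}=0$.

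There is no real obstacle here: both assertions are standard linear-algebraic facts about a system of matrix units that happens to be closed under the conjugate transpose. The only thing to take care of is invoking the relations in the correct order, since the matrices $E_{i,j}^{(k)}$ are elements of the adjacency algebra rather than literal matrix units in a fixed basis, so one must not secretly use more than the two identities stated.
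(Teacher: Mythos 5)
Your proof is correct and takes essentially the same route as the paper: both rest on rank submultiplicativity and the cyclicity of the trace applied to the matrix-unit relations, with the off-diagonal trace computation in part (2) being identical up to which diagonal unit you absorb. The only minor deviations are in part (1), where you sandwich $E_{i,j}^{(k)}$ between two units to compare ranks directly (avoiding the star condition, which the paper uses via ${E_{i,j}^{(k)}}^*=E_{j,i}^{(k)}$ together with $E_{i,j}^{(k)}E_{j,i}^{(k)}=E_{i,i}^{(k)}$), and in part (2), where your appeal to self-adjointness is harmless but unnecessary, since any complex idempotent already has trace equal to rank.
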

\begin{proof}
Let $i,j\in\{1,\ldots,d_k\}$.

(1): 
Since ${E_{i,j}^{(k)}}^*=E_{j,i}^{(k)}$, $\mathrm{rank}{E_{i,j}^{(k)}}=\mathrm{rank}E_{j,i}^{(k)}$. 
Next we use $E_{i,j}^{(k)}E_{j,i}^{(k)}=E_{i,i}^{(k)}$ to obtain $\mathrm{rank}{E_{i,i}^{(k)}}=\mathrm{rank}E_{i,j}^{(k)}$. 
This proves (1). 

(2): Since $E_{i,i}^{(k)}$ is an idempotent, $\mathrm{tr}E_{i,i}^{(k)}=\mathrm{rank}{E_{i,i}^{(k)}}$. 
Next assume that $i$ is not equal to $j$. Then, taking the trace of $E_{i,j}^{(k)}=E_{i,i}^{(k)}E_{i,j}^{(k)}$, we have 
\begin{align*} 
\mathrm{tr}E_{i,j}^{(k)}=\mathrm{tr}(E_{i,i}^{(k)}E_{i,j}^{(k)})=\mathrm{tr}(E_{i,j}^{(k)}E_{i,i}^{(k)})=0. 
\end{align*} 
Therefore we have $\mathrm{tr}E_{i,j}^{(k)}=\delta_{i,j}\mathrm{rank}{E_{i,j}^{(k)}}$.
\end{proof}
By Lemma~\ref{lem:m}, we let $m_k=\mathrm{rank}{E_{i,j}^{(k)}}$ for $k\in\{1,\ldots,n\}$ and any $i,j\in\{1,\ldots,d_k\}$. 
We also let $v_l=\frac{1}{v}\tau(A_l)$, where $\tau$ denotes the function assigning a matrix to the sum of the entries. 

The following proposition shows how to derive the character table from the matrix $Q$. 
The proof is based on the same idea as in \cite[Theorem 3.5 (i)]{BI}.
\begin{proposition}\label{prop:ct}\rm 
\begin{enumerate}
\item  For each $k$, the $k$-th row of $T$ is the scalar multiple by $m_k$ of the sum of the rows of $P_k$ corresponding to all indices $(i,i)$. 
\item $\Delta_m P= \bar{Q} \Delta_v$ holds, where $\bar{Q}$ is the complex conjugate of $Q$,  $\Delta_m$ is the diagonal matrix indexed by $\mathcal{I}$ with $(i,j,k)$-th entry equal to $m_k$ and $\Delta_v$ is the diagonal matrix indexed by $\{0,1,\ldots,d\}$ with $i$-th entry equal to $v_i$. 
\end{enumerate}
\end{proposition}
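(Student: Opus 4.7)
The plan is to exploit the change-of-basis identities
\[
A_l=\sum_{(i,j,k)\in\mathcal{I}} p_{(i,j),l}^{(k)}E_{i,j}^{(k)}
\qquad\text{and}\qquad
E_{i,j}^{(k)}=\frac{1}{v}\sum_{l=0}^{d} q_{l,(i,j)}^{(k)}A_l
\]
together with the matrix-unit relations satisfied by the $E_{i,j}^{(k)}$ and Lemma~\ref{lem:m}, to evaluate suitable traces in two different ways.

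For part (1), I would introduce the central idempotent $F_k=\sum_{i=1}^{d_k}E_{i,i}^{(k)}$ of $\mathcal{A}$, which is the identity element of the $k$-th simple summand. Since $\mathrm{tr}(\varphi_k(A_l))$ in the present context records the trace of $A_l$ on the $k$-th isotypic component of $\mathbb{C}^v$, we have $T_{k,l}=\mathrm{tr}(F_k A_l)$ computed inside $\text{Mat}_v(\mathbb{C})$. Multiplying the first expansion on the left by $F_k$ and using $F_k E_{i,j}^{(k')}=\delta_{k,k'}E_{i,j}^{(k)}$ (a direct consequence of the matrix-unit relations) gives $F_k A_l=\sum_{i,j} p_{(i,j),l}^{(k)}E_{i,j}^{(k)}$, and then Lemma~\ref{lem:m}(2) collapses the trace to $m_k\sum_i p_{(i,i),l}^{(k)}$, which is the claim.

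For part (2), the entry-wise form of $\Delta_m P=\bar Q\Delta_v$ reads
\[
m_k\, p_{(i,j),l}^{(k)} \;=\; v_l\,\overline{q_{l,(i,j)}^{(k)}}\qquad\bigl((i,j,k)\in\mathcal{I},\ l\in\{0,\ldots,d\}\bigr).
\]
I would prove this by computing $\mathrm{tr}\bigl(A_l\,(E_{i,j}^{(k)})^{*}\bigr)$ in two ways. Expanding $A_l$ via the first identity, substituting $(E_{i,j}^{(k)})^{*}=E_{j,i}^{(k)}$, and using the relations $E_{i',j'}^{(k')}E_{j,i}^{(k)}=\delta_{k,k'}\delta_{j',j}E_{i',i}^{(k)}$ together with Lemma~\ref{lem:m}(2) gives $m_k\,p_{(i,j),l}^{(k)}$. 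On the other hand, taking the conjugate transpose of the second identity yields $(E_{i,j}^{(k)})^{*}=\tfrac{1}{v}\sum_{l'}\overline{q_{l',(i,j)}^{(k)}}A_{l'}^{\top}$, and then the orthogonality $\mathrm{tr}(A_l A_{l'}^{\top})=\tau(A_l)\delta_{l,l'}=v\,v_l\,\delta_{l,l'}$ — which is immediate from axioms (i)--(ii), since the $A_l$ are $0/1$-matrices partitioning $J_v$ — picks out the term $l'=l$ and gives $v_l\,\overline{q_{l,(i,j)}^{(k)}}$. Equating the two expressions yields the desired identity.

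The only step requiring any care is the orthogonality $\mathrm{tr}(A_l A_{l'}^{\top})=v\,v_l\,\delta_{l,l'}$, but this falls out at once from the scheme axioms; the rest is routine bookkeeping with the matrix-unit relations, so I do not anticipate a genuine obstacle.
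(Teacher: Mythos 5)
Your proof is correct and follows essentially the same route as the paper: part (1) is the paper's computation with the central idempotent $F_k=\sum_i E_{i,i}^{(k)}$ made explicit (which in fact clarifies the paper's implicit reading of $\mathrm{tr}(\varphi_k(A_l))$ as the trace on the $k$-th isotypic component, accounting for the factor $m_k$), and part (2) is the same two-way evaluation of a trace pairing between $A_l$ and the dual basis using Lemma~\ref{lem:m}. The only cosmetic difference in (2) is that you place the conjugate transpose on $E_{i,j}^{(k)}$ and invoke the orthogonality $\mathrm{tr}(A_lA_{l'}^\top)=v\,v_l\,\delta_{l,l'}$, whereas the paper computes $\mathrm{tr}(A_lE_{i,j}^{(k)})$ via the Hadamard-product identity $\mathrm{tr}(AB)=\tau(A^\top\circ B)$ together with $A_l^\top=A_{l'}$, $v_{l'}=v_l$ and ${E_{i,j}^{(k)}}^*=E_{j,i}^{(k)}$ --- the same disjoint-support fact in different packaging.
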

\begin{proof}
(1): Since $\varphi_k(A_l)=\sum_{i,j=1}^{d_k}p_{(i,j),l}^{(k)}E_{i,j}^{(k)}$, we use  Lemma~\ref{lem:m} to obtain the following; 
\begin{align*}
\mathrm{tr}(\varphi_k(A_l))=\sum_{i,j=1}^{d_k}p_{(i,j),l}^{(k)}\mathrm{tr}(E_{i,j}^{(k)})=\sum_{i=1}^{d_k}p_{(i,i),l}^{(k)}m_k,\end{align*} 
from which we obtain the desired result. 

(2): 
Calculating $\textrm{tr}(A_lE_{i,j}^{(k)})$ in two ways as follows. On the one hand, by Lemma~\ref{lem:m} 
\begin{align*}
\textrm{tr}(A_lE_{i,j}^{(k)})&=\textrm{tr}(\sum_{i'=1}^{d_k} p_{(i',i),l}^{(k)}E_{i',j}^{(k)})\\
&=p_{(j,i),l}^{(k)}m_k.
\end{align*}
On the other hand, 
let $l'$ be such that $A_l^\top=A_{l'}$ and  $\circ$ denote the entry-wise product for matrices. 
Since $\textrm{tr}(AB)=\tau(A^\top\circ B)$ and $v_{l'}=v_l$, we have 
\begin{align*}
\textrm{tr}(A_lE_{i,j}^{(k)})&=\tau(A_{l'}\circ E_{i,j}^{(k)})\\
&=\tau(\frac{1}{v}q_{l',(i,j)}^{(k)}A_{l'})\\
&=v_{l'}  q_{l',(i,j)}^{(k)}\\
&=v_l  \overline{q_{l,(j,i)}^{(k)}},  
\end{align*}
where the last equality follows from ${E_{i,j}^{(k)}}^*=E_{j,i}^{(k)}$. 
Thus we obtain the desired result. 
\end{proof}

\section{Bannai-Muzychuk criterion for non-commutative association schemes}
Let $(X,\{R_i\}_{i=0}^d)$ be an association scheme. 
Let $\{\Lambda_0,\Lambda_1,\ldots,\Lambda_e\}$ be a partition of $\{0,1,\ldots,d\}$ such that $\Lambda_0=\{0\}$. 
If $(X,\{\cup_{l'\in\Lambda_l}R_{l'}\}_{l=0}^e)$ is an association scheme, then it is called a \emph{fusion scheme} of the association scheme $(X,\{R_i\}_{i=0}^d)$. 
The following is a result obtained by Bannai \cite{B} and by Muzychuk \cite{M} independently. It is a criterion characterizing the fusion scheme in terms of eigenmatrices for commutative schemes. 
For commutative association schemes, the index set of a dual basis $\mathcal{I}$ is of the form $\mathcal{I}=\{(1,1,k)\mid k=1,\ldots,d+1\}$. 
Thus we use the standard notion of the entries of the first eigenmatrix $P=(p_{i,j})_{i,j=0}^{d+1}$: $p_{i,j}=p_{(1,1),j}^{(i)}$ for $i,j\in\{0,1,\ldots,d\}$. 
\begin{theorem}\label{thm:BMc}\rm 
Let $(X,\{R_i\}_{i=0}^d)$ be a commutative association scheme. Let $\{\Lambda_0,\Lambda_1,\ldots,\Lambda_e\}$ be a partition of $\{0,1,\ldots,d\}$ such that $\Lambda_0=\{0\}$. 
Then the following are equivalent. 
\begin{enumerate}
\item $(X,\{\bigcup_{l'\in\Lambda_l}R_{l'}\}_{l=0}^e)$ is a fusion scheme of $(X,\{R_i\}_{i=0}^d)$.
\item  (a) For any $l\in\{0,1,\ldots,e\}$ there exists $k\in\{0,1,\ldots,e\}$ such that $\sum_{l'\in\Lambda_l}A_{l'}^\top=\sum_{k'\in\Lambda_k}A_{k'}$, and (b) there exists a partition $\{\Delta_k\mid k=0,1,\ldots,e\}$ of $\{0,1,\ldots,n\}$ such that $\Delta_0=\{0\}$ and  $\sum_{l'\in\Lambda_{l}}p_{k',l'}$ does not depend on the choice of $k'\in \Delta_k$.
\end{enumerate}
\end{theorem}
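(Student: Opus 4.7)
The plan is to reformulate everything in terms of the primitive idempotents $E_0,E_1,\ldots,E_d$ of the commutative algebra $\mathcal{A}$, where $A_l=\sum_{i=0}^{d}p_{i,l}E_i$. Set $B_l=\sum_{l'\in\Lambda_l}A_{l'}$ for $l=0,1,\ldots,e$. The axioms (i)--(iii) for $\{B_l\}$ are essentially handled by condition (a) (together with $B_0=I$, $\sum_l B_l=J$, which are automatic from $\Lambda_0=\{0\}$ and the fact that $\{\Lambda_l\}$ partitions $\{0,1,\ldots,d\}$). So the content of the theorem is the equivalence between the multiplicative closure axiom (iv) for $\{B_l\}$ and the refinement condition (b) on the first eigenmatrix $P$.

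For the direction $(1)\Rightarrow(2)$, assume the fusion exists. Condition (a) is immediate from axiom (iii) applied to $\{B_l\}$. The fusion algebra $\mathcal{B}=\mathrm{span}\{B_l\}$ is a commutative $*$-subalgebra of $\mathcal{A}$, so it is semisimple with its own set of primitive idempotents $F_0,F_1,\ldots,F_e$. Since each $F_k$ is an idempotent in $\mathcal{A}$, it must be a sum $F_k=\sum_{i\in\Delta_k}E_i$ for a unique subset $\Delta_k\subseteq\{0,1,\ldots,d\}$, and the $\Delta_k$ partition $\{0,1,\ldots,d\}$ with $\Delta_0=\{0\}$ corresponding to $F_0=\frac{1}{v}J$. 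Expanding $B_l=\sum_{i}(\sum_{l'\in\Lambda_l}p_{i,l'})E_i$ and comparing with the expansion of $B_l$ in the $F_k$, one reads off that $\sum_{l'\in\Lambda_l}p_{i,l'}$ must be constant on each block $\Delta_k$, which is (b).

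For the converse $(2)\Rightarrow(1)$, define $F_k=\sum_{i\in\Delta_k}E_i$; since the $E_i$ are mutually orthogonal idempotents summing to $I$, the $F_k$ are mutually orthogonal idempotents summing to $I$. Condition (b) allows us to write
\begin{equation*}
B_l=\sum_{i=0}^{d}\Bigl(\sum_{l'\in\Lambda_l}p_{i,l'}\Bigr)E_i=\sum_{k=0}^{e}\alpha_{k,l}F_k
\end{equation*}
where $\alpha_{k,l}$ is the common value of $\sum_{l'\in\Lambda_l}p_{i,l'}$ on $i\in\Delta_k$. Hence $\mathrm{span}\{B_l\}\subseteq\mathrm{span}\{F_k\}$, and since both sides have dimension $e+1$ (the $B_l$ are linearly independent because the $A_{l'}$ are and the $\Lambda_l$ are disjoint, and the $F_k$ are linearly independent as orthogonal non-zero idempotents), equality holds. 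Thus $\mathrm{span}\{B_l\}$ is closed under multiplication, which gives axiom (iv); combined with (a) this shows $\{B_l\}$ is an association scheme.

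The main technical point, and the only place where care is needed, is the count of dimensions and the verification that the number of blocks of $\{\Delta_k\}$ equals $e+1$: one should check that $|\Delta|\ge e+1$ because the $B_l$ are independent, while $|\Delta|\le e+1$ because each $B_l$ is a linear combination of the $F_k$ and the $F_k$ are independent. Once this matching is in place, the equivalence of the two axiomatic packages drops out of the spectral decomposition of $\mathcal{B}$ inside $\mathcal{A}$.
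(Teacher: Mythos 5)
Your proof is correct, and the comparison with the paper is a little unusual: the paper does not actually prove Theorem~\ref{thm:BMc} at all (it is quoted from Bannai and Muzychuk); the only proof given is of the one-directional non-commutative analogue, Theorem~\ref{thm:BM}. Your $(2)\Rightarrow(1)$ argument is precisely the commutative specialization of that proof: the paper's subalgebra $\mathcal{A}''$ generated by the grouped elements $\sum_{(i,j)\in\mathcal{I}_{a,b}^{(k)}}E_{i,j}^{(k)}$ collapses, in the commutative case, to your span of the orthogonal idempotents $F_k=\sum_{i\in\Delta_k}E_i$, the paper's dimension hypothesis $\sum_k f_k^2=e+1$ becomes your requirement that $\{\Delta_k\}$ have exactly $e+1$ (nonempty) blocks, and both arguments conclude $\mathrm{span}\{B_l\}=\mathrm{span}\{F_k\}$ and hence multiplicative closure, with axioms (i), (ii) automatic and (iii) supplied by condition (a). Your $(1)\Rightarrow(2)$ direction, which the paper omits entirely, is the standard argument and is sound as written; the points you gesture at do all check out: $\mathcal{B}$ is a $*$-closed subalgebra because the $B_l$ are real $(0,1)$-matrices so (a) gives closure under conjugate transpose, and it contains $I=B_0$, hence it is semisimple with exactly $e+1$ primitive idempotents since $\dim\mathcal{B}=e+1$ (the $B_l$ have disjoint supports); every idempotent of $\mathcal{A}\cong\mathbb{C}^{d+1}$ is a sum of $E_i$'s, the blocks $\Delta_k$ are disjoint and exhaustive because the $F_k$ are mutually orthogonal with $\sum_k F_k=I$, and $E_0=\frac{1}{v}J$ remains primitive in $\mathcal{B}$ because it is already primitive in the larger algebra $\mathcal{A}$, forcing $\Delta_0=\{0\}$; comparing coefficients of the linearly independent $E_i$ in the two expansions of $B_l$ then yields (b). One cosmetic remark: the statement's index set $\{0,1,\ldots,n\}$ for the partition $\{\Delta_k\}$ should be read as $\{0,1,\ldots,d\}$, indexing the primitive idempotents of the commutative scheme (the paper's general-purpose $n$ counts simple components and does not align with this notation), which is exactly how you read it.
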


In the following, we give a sufficient condition for a non-commutative association scheme to have a fusion scheme, an analog of one way of Theorem~\ref{thm:BMc} in non-commutative association schemes.

Note that in the following lemma we use a specific partition of $\mathcal{I}=\{(i,j,k)\in\mathbb{N}^3 \mid 1\leq i,j\leq d_k, 1\leq k\leq n\}$, for the index set of a dual basis. 
For each $k\in\{1,\ldots,n\}$, let $\{I_1^{(k)},\ldots,I_{f_k}^{(k)}\}$ be a partition of $\{1,\ldots,d_k\}$.   
For $a,b\in\{1,\ldots,f_k\}$, define $\mathcal{I}_{a,b}^{(k)}=I_a^{(k)}\times I_b^{(k)}$.  
Then we call the partitions $\mathcal{I}_{a,b}^{(k)}$ ($k\in\{1,\ldots,n\}, a,b\in \{1,\ldots,f_k\}$) of $\{1,\ldots,d_k\}^2$ \emph{canonical}. 
\begin{theorem}\label{thm:BM}\rm 
Let $(X,\{R_i\}_{i=0}^d)$ be an association scheme. 
Let $\{\Lambda_0,\Lambda_1,\ldots,\Lambda_e\}$ be a partition of $\{0,1,\ldots,d\}$ such that $\Lambda_0=\{0\}$. 
Assume 
\begin{enumerate}[(a)]
\item For any $l\in\{0,1,\ldots,e\}$ there exists $k\in\{0,1,\ldots,e\}$ such that $\sum_{l'\in\Lambda_l}A_{l'}^\top=\sum_{k'\in\Lambda_k}A_{k'}$, and 
\item there exists a canonical partition $\mathcal{I}_{a,b}^{(k)}$ ($a,b\in \{1,\ldots,f_k\}$) of $\{1,\ldots,d_k\}^2$ for any $k\in\{1,\ldots,n\}$ such that $\sum_{k=1}^{n}f_k^2=e+1$ and  
$\sum_{l'\in\Lambda_{l}}p_{(i,j),l'}^{(k)}$ does not depend on the choice of $(i,j)\in \mathcal{I}_{a,b}^{(k)}$.  
\end{enumerate}
Then $(X,\{\bigcup_{l'\in\Lambda_l}R_{l'}\}_{l=0}^e)$ is a fusion scheme of $(X,\{R_i\}_{i=0}^d)$.
\end{theorem}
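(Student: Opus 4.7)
The plan is to verify the four association-scheme axioms for the fused matrices $B_l := \sum_{l' \in \Lambda_l} A_{l'}$, $l = 0, 1, \ldots, e$. Three of them are immediate: $B_0 = A_0 = I_v$ because $\Lambda_0 = \{0\}$; $\sum_{l=0}^e B_l = J_v$ because $\{\Lambda_l\}$ partitions $\{0, \ldots, d\}$; and closure under transpose is exactly hypothesis (a). All of the content will lie in showing that $B_l B_{l'}$ is a linear combination of $B_0, \ldots, B_e$.

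To establish this, I would work in the dual basis and introduce the block sums
\[ F_{a,b}^{(k)} := \sum_{(i,j) \in \mathcal{I}_{a,b}^{(k)}} E_{i,j}^{(k)} \]
for $k \in \{1,\ldots,n\}$ and $a,b \in \{1,\ldots,f_k\}$. Using $E_{i,j}^{(k)} E_{i',j'}^{(k')} = \delta_{k,k'} \delta_{j,i'} E_{i,j'}^{(k)}$, and the fact that $I_b^{(k)}$ and $I_{a'}^{(k)}$ are parts of a single partition (so either disjoint or equal), a direct computation gives
\[ F_{a,b}^{(k)} F_{a',b'}^{(k')} = \delta_{k,k'} \delta_{b,a'} |I_b^{(k)}|\, F_{a,b'}^{(k)}. \]
Consequently the span $\mathcal{B} \subseteq \mathcal{A}$ of the $F_{a,b}^{(k)}$'s is a subalgebra. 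Because each $F_{a,b}^{(k)}$ is a sum over a distinct block of the basis $\{E_{i,j}^{(k)}\}$, these matrices are linearly independent, and $\dim \mathcal{B} = \sum_{k=1}^n f_k^2 = e+1$ by the first half of hypothesis (b).

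Next I would use the constancy part of (b) to place each $B_l$ inside $\mathcal{B}$. If $\pi_{a,b,l}^{(k)}$ denotes the common value of $\sum_{l' \in \Lambda_l} p_{(i,j),l'}^{(k)}$ on the block $\mathcal{I}_{a,b}^{(k)}$, then summing the expansion $A_{l'} = \sum_{(i,j,k)} p_{(i,j),l'}^{(k)} E_{i,j}^{(k)}$ over $l' \in \Lambda_l$ yields $B_l = \sum_{k,a,b} \pi_{a,b,l}^{(k)} F_{a,b}^{(k)} \in \mathcal{B}$. Since the $A_i$ are linearly independent and the $\Lambda_l$ are disjoint non-empty sets, the $e+1$ matrices $B_0, \ldots, B_e$ are linearly independent elements of the $(e+1)$-dimensional algebra $\mathcal{B}$, hence form a basis of it. Therefore $B_l B_{l'} \in \mathcal{B}$ is a linear combination of $B_0, \ldots, B_e$, giving axiom (iv).

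The principal conceptual step, and the one most easily missed, is the identification of the correct ambient subalgebra $\mathcal{B}$; once the block sums $F_{a,b}^{(k)}$ are written down, the multiplication rule and the dimension count are routine, and the theorem collapses to a basis-comparison argument inside $\mathcal{B}$. The delicate point is matching the two parts of hypothesis (b) — the dimension condition $\sum f_k^2 = e+1$ and the constancy of the partial $p$-sums — so that $\{B_l\}_{l=0}^{e}$ is forced to span the whole of $\mathcal{B}$.
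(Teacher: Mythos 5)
Your proof is correct and follows essentially the same route as the paper: you identify the same subalgebra (the paper's $\mathcal{A}''$, your $\mathcal{B}$) spanned by the block sums $\sum_{(i,j)\in\mathcal{I}_{a,b}^{(k)}}E_{i,j}^{(k)}$, use the constancy hypothesis in (b) to place each fused matrix $B_l$ inside it, and conclude by the dimension count $\sum_{k=1}^n f_k^2=e+1$ that the $B_l$ span it, forcing closure under multiplication. Your only addition is to write out explicitly the multiplication rule $F_{a,b}^{(k)}F_{a',b'}^{(k')}=\delta_{k,k'}\delta_{b,a'}\lvert I_b^{(k)}\rvert F_{a,b'}^{(k)}$, which justifies that the span of the block sums is already an algebra of dimension $\sum_k f_k^2$ --- a point the paper leaves implicit in the phrase ``subalgebra generated by.''
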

\begin{proof}

Let $\mathcal{A}'$ be the vector space spanned by $\sum_{l'\in \Lambda_l}A_{l'}$ for $l\in\{0,1,\ldots,e\}$. Then $\mathcal{A}'$ is closed under the transposition by (a). 
Consider a subalgebra $\mathcal{A}''$ generated by the matrices $\sum_{(i,j)\in \mathcal{I}_{a,b}^{(k)}}E_{i,j}^{(k)}$ ($k\in\{1,\ldots,n\},a,b\in \{1,\ldots,f_k\}$). 
Letting $c_{(i,j),l}^{(k)}=\sum_{l'\in\Lambda_{l}}p_{(i,j),l'}^{(k)}$, we have 
\begin{align*}
\sum_{k=1}^n\sum_{a,b=1}^{f_k} \sum_{(i,j)\in \mathcal{I}_{a,b}^{(k)}}  c_{(i,j),l}^{(k)}E_{i,j}^{(k)}
&=\sum_{k=1}^n\sum_{a,b=1}^{f_k} \sum_{(i,j)\in \mathcal{I}_{a,b}^{(k)}}  \left(\sum_{l'\in\Lambda_{l}}p_{(i,j),l'}^{(k)}\right)E_{i,j}^{(k)}\\
&=\sum_{l'\in\Lambda_{l}}\sum_{(i,j,k)\in\mathcal{I}}p_{(i,j),l'}^{(k)}E_{i,j}^{(k)}\\
&=\sum_{l'\in\Lambda_{l}}A_{l'}.   
\end{align*}
Thus the subalgebra $\mathcal{A}''$ includes the vector space $\mathcal{A}'$. 
Since the dimension of $\mathcal{A}'$ and $\mathcal{A}''$ as the vector space over $\mathbb{C}$ coincide by the assumption $\sum_{k=1}^{n}f_k^2=e+1$, we have $\mathcal{A}'=\mathcal{A}''$. 
Now since $\mathcal{A}''$ is closed under the matrix multiplication, so is $\mathcal{A}'$.  
Thus $(X,\{\bigcup_{l'\in\Lambda_l}R_{l'}\}_{l=0}^e)$ is an association scheme. 
\end{proof}

\section{Symmetric $BGW(n+1,n,n-1)$ with zero diagonal entries over the cyclic group}
In this section, we construct a non-commutative association scheme from any given symmetric $BGW(n+1,n,n-1)$ with zero diagonal entries over a cyclic group. 
For completion we first recall the definition and the existence of symmetric $BGW(q+1,q,q-1)$ with zero diagonal entries over a cyclic group.

Let $G$ be a multiplicatively written finite group. 
A \emph{balanced generalized weighing matrix with parameters $(v, k, \lambda)$ over $G$}, or a \emph{$BGW(v, k, \lambda)$ over $G$}, is a matrix $W = (w_{ij})$
of order $v$ with entries from $G \cup \{0\}$ such that (i) every row of $W$ contains exactly
$k$ nonzero entries and (ii) for any distinct $i, h \in \{1, 2,\ldots, v\}$, every element of $G$ is
contained exactly $\lambda/|G|$ times in the multiset $\{w_{ij} w^{-1}_{hj} \mid 1 \leq j \leq v, w_{ij} \neq 0, w_{hj}\neq 0\}$.
A BGW with $v=k$ is said to be a \emph{generalized Hadamard matrix}, which will be dealt in Section~\ref{sec:GH}. 
The following is a basic result of symmetric balanced generalized weighing matrices.
\begin{lemma}{\rm \cite[Lemma~3]{KT}}
Let $q,m,t$ be positive integers such that $q$ is a prime power, $q=mt+1$, $t$ is even. 
Then there is a symmetric $BGW(q+1,q,q-1)$ with zero diagonal entries over the cyclic group of order $m$.  
\end{lemma}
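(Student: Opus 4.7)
The plan is to give an explicit algebraic construction of $W$ using the multiplicative structure of $\mathbb{F}_q$. First observe that $t$ even together with $q=mt+1$ forces $q$ odd, so $\mathrm{char}\,\mathbb{F}_q\neq 2$. The group $\mathbb{F}_q^*$ is cyclic of order $mt$, so it contains a unique subgroup $H$ of order $t$, and the canonical projection $\chi:\mathbb{F}_q^*\to\mathbb{F}_q^*/H$ may be identified with a surjective homomorphism onto $C_m$. The one place the parity hypothesis enters is that, since $-1$ has order $2$ and $t$ is even, $-1\in H$, hence $\chi(-1)=1$.

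I would then index the rows and columns of $W$ by the projective line $\mathbb{F}_q\cup\{\infty\}$ of size $q+1$, and set
\begin{align*}
w_{xx} &= 0 &&\text{for } x\in \mathbb{F}_q\cup\{\infty\},\\
w_{xy} &= \chi(y-x) &&\text{for distinct } x,y\in \mathbb{F}_q,\\
w_{x\infty}=w_{\infty x} &= 1 &&\text{for } x\in\mathbb{F}_q.
\end{align*}
By construction the diagonal is zero, every row has exactly $q$ nonzero entries, and symmetry is immediate from $\chi(-1)=1$.

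The nontrivial step is verifying the balanced condition, which I would split into two cases. For distinct rows $x,h\in\mathbb{F}_q$ the relevant multiset is
\[
M=\{\chi((j-x)(j-h)^{-1}):j\in\mathbb{F}_q\setminus\{x,h\}\}\cup\{1\},
\]
the extra $1$ coming from $j=\infty$. The M\"obius transformation $j\mapsto (j-x)(j-h)^{-1}$ is a bijection $\mathbb{F}_q\cup\{\infty\}\setminus\{h\}\to\mathbb{F}_q\cup\{\infty\}$ sending $x\mapsto 0$ and $\infty\mapsto 1$, so the first piece equals $\{\chi(u):u\in\mathbb{F}_q^*\setminus\{1\}\}$; adjoining the extra $1=\chi(1)$ reconstitutes $\{\chi(u):u\in\mathbb{F}_q^*\}$, in which every element of $C_m$ occurs exactly $|H|=t$ times. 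For one row equal to $\infty$ and the other $h\in\mathbb{F}_q$, the multiset collapses to $\{\chi((j-h)^{-1}):j\in\mathbb{F}_q\setminus\{h\}\}=\{\chi(u):u\in\mathbb{F}_q^*\}$, giving the same count. Since $\lambda/|G|=(q-1)/m=t$, this is exactly the balanced condition for a $BGW(q+1,q,q-1)$ over $C_m$.

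The only mild obstacle is the bookkeeping in the M\"obius-transformation argument, in particular ensuring that the $\infty$-column contributes exactly one entry in $M$ so that the final multiset is all of $\mathbb{F}_q^*$ with no missing or double-counted value of $1$. The underlying algebraic content is just (i) the existence of a surjective homomorphism $\mathbb{F}_q^*\to C_m$ whose kernel contains $-1$, which is precisely what the hypothesis ``$t$ even'' supplies, and (ii) the equidistribution of $\chi$ on $\mathbb{F}_q^*$.
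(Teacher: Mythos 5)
Your proof is correct, and it is essentially the classical construction: the paper itself gives no proof of this lemma but cites it from Kharaghani--Torabi \cite[Lemma~3]{KT}, where the matrix is obtained in the same way, by indexing rows and columns by the projective line over $\mathbb{F}_q$ and taking entries via the canonical character $\chi:\mathbb{F}_q^*\to\mathbb{F}_q^*/H\cong C_m$ (cf.\ also the $PGL(2,q)$-based constructions in \cite{GM}). You correctly isolate the two essential points --- that $t$ even forces $-1\in H$, so $\chi(-1)=1$ gives symmetry, and that the M\"obius bijection $j\mapsto(j-x)(j-h)^{-1}$ together with the $j=\infty$ column makes the quotient multiset exactly $\{\chi(u)\mid u\in\mathbb{F}_q^*\}$, each element of $C_m$ occurring $t=(q-1)/m$ times --- so nothing further is needed.
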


Let $n,m$ be positive integers. 
Let $W=(w_{i,j})_{i,j=1}^{n+1}$ be any symmetric $BGW(n+1,n,n-1)$ with zero diagonal entries over the cyclic group $C_m=\langle U \rangle$ generated by the circulant matrix $U$ of order $m$ with the first row $(0,1,0,\ldots,0)$. 
We construct a non-commutative association scheme from $W$ as follows.  


For $l\in\{0,1,\ldots,m-1\}$, define $N_l$ to be an $(n+1)m\times (n+1)m$ $(0,1)$-matrix with $m\times m$ block submatrices such that its $(i,j)$-block equals to 
\begin{align*} 
[N_l]_{ij}&=
\begin{cases}
J_{m} & \text{ if } i=j,\\
w_{ij}U^l R & \text{ if }i\neq j,
\end{cases}
\end{align*}
where $[N_l]_{ij}$ denotes the $(i,j)$-block of $N_l$ and 
$R$ is the back diagonal matrix of order $m$. 
Note that $UR=RU^{-1}$, and $w_{ij}U=Uw_{ij}$ since $w_{ij}$ is a power of $U$.  
Then the following holds. 
\begin{theorem}\label{thm:GDD}
\begin{enumerate}
\item The matrices $N_0,\ldots,N_{m-1}$ are symmetric matrices and  share the diagonal blocks $I_{n+1}\otimes J_m$.  
\item For $l\in\{0,1,\ldots,m-1\}$, $N_l(I_{n+1}\otimes J_m)=(I_{n+1}\otimes J_m)N_l=mI_{n+1}\otimes J_m+(J_{n+1}-I_{n+1})\otimes J_m$. 
\item For $l,l'\in\{0,1,\ldots,m-1\}$, 
\begin{align*}
N_l N_{l'}=I_{n+1}\otimes (mJ_m+nU^{l-l'})+(2+\frac{n-1}{m})(J_{n+1}-I_{n+1})\otimes J_m. 
\end{align*}
\end{enumerate}
\end{theorem}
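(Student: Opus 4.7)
The three parts break into independent block computations, which I would tackle in order.

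For Part (1), each diagonal block $[N_l]_{ii}=J_m$ is independent of $l$, so all $N_l$ share the block-diagonal part $I_{n+1}\otimes J_m$. For symmetry, since $W$ is symmetric, $w_{ij}=w_{ji}$, so it suffices to prove that $(wU^lR)^\top=wU^lR$ for any $w\in\langle U\rangle$. Using $R^\top=R$, $R^2=I$, $U^\top=U^{-1}$, the given relation $UR=RU^{-1}$ (hence $U^lR=RU^{-l}$), and the fact that $w$ commutes with $U$, this is a short direct calculation.

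For Part (2), I would compute block-wise. The $(i,j)$-block of $N_l(I_{n+1}\otimes J_m)$ equals $[N_l]_{ij}J_m$. On the diagonal this gives $J_m^2=mJ_m$; off the diagonal it gives $J_m$, because each of $w_{ij}$, $U^l$, $R$ is a permutation matrix and left-multiplication by a permutation matrix fixes $J_m$ (the rows of $J_m$ are all equal). Collecting, the matrix is $mI_{n+1}\otimes J_m+(J_{n+1}-I_{n+1})\otimes J_m$. The right-multiplication $(I_{n+1}\otimes J_m)N_l$ follows either by the symmetric argument (permutation matrices also fix $J_m$ on the right) or by transposing and invoking Part (1).

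Part (3) is the heart of the theorem. I would compute the $(i,j)$-block of $N_lN_{l'}$ as $\sum_k [N_l]_{ik}[N_{l'}]_{kj}$, relying on the key algebraic identity for the off-diagonal product:
\begin{align*}
(w_{ii'}U^lR)(w_{j'j}U^{l'}R)=w_{ii'}w_{j'j}^{-1}U^{l-l'},
\end{align*}
obtained by commuting $w_{j'j}$ past $R$ via $Rw=w^{-1}R$, then commuting $w_{j'j}^{-1}$ past $U^l$, and finally using $RU^{l'}R=U^{-l'}$. For $i=j$, the diagonal term $k=i$ contributes $J_m^2=mJ_m$, while each of the remaining $n$ terms contributes $w_{ik}w_{ki}^{-1}U^{l-l'}=U^{l-l'}$ after invoking $w_{ik}=w_{ki}$, summing to $nU^{l-l'}$. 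For $i\neq j$, the two terms $k=i$ and $k=j$ each yield $J_m$; for the remaining $k$ with both $w_{ik}$ and $w_{kj}$ nonzero, I substitute $w_{kj}=w_{jk}$ and invoke the BGW axiom: the multiset $\{w_{ik}w_{jk}^{-1}\}$ contains each element of $C_m$ exactly $(n-1)/m$ times, so the partial sum is $\frac{n-1}{m}\sum_{g\in C_m}g\cdot U^{l-l'}=\frac{n-1}{m}J_m$, using $\sum_{g\in C_m}g=J_m$ and $J_mU^{l-l'}=J_m$. Adding the contributions gives $(2+\frac{n-1}{m})J_m$ off-diagonally, yielding the stated formula.

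The main obstacle is the bookkeeping in Part (3): carefully tracking the interaction of $w_{ij}$, $U$, and $R$ under the two non-commuting relations $UR=RU^{-1}$ and $wU=Uw$, and recognising that the symmetry of $W$ is exactly what converts the naturally occurring product $w_{ik}w_{kj}^{-1}$ into the form $w_{ik}w_{jk}^{-1}$ that the BGW definition counts.
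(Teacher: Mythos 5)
Your proposal is correct and takes essentially the same route as the paper's proof: the same block-wise computation, the same relations $UR=RU^{-1}$, $R^2=I_m$, $wU=Uw$, the BGW axiom yielding $\frac{n-1}{m}J_m$ off the diagonal, and the zero diagonal yielding the $nU^{l-l'}$ term. The only cosmetic difference is that the paper writes $[N_lN_{l'}]_{ij}=\sum_k(\delta_{i,k}J_m+w_{ik}U^lR)(\delta_{j,k}J_m+w_{jk}U^{l'}R)^\top$, invoking the symmetry of $N_{l'}$ up front, whereas you multiply $[N_l]_{ik}[N_{l'}]_{kj}$ directly and use $w_{kj}=w_{jk}$ at the end — the two are interchangeable.
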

\begin{proof}
(1): 
Letting $i,j$ be distinct, the transpose of $(i,j)$-block of $N_l$ is 
\begin{align*}
(w_{ij}U^l R)^\top=RU^{-l}w_{ij}^{-1}=U^lRw_{ij}^{-1}=U^l w_{ij}R= w_{ij}U^l R, 
\end{align*} 
which is equal to the $(j,i)$-block of $N_l$. 
Thus each $N_l$ is symmetric. 
Since the off-diagonal blocks of $N_l$ and $N_{l'}$ for distinct $l,l'$ are disjoint, $N_l$ and $N_{l'}$ share the diagonal blocks $I_{n+1}\otimes J_m$. 

(2): It follows from the fact that $(w_{ij}U^l R)J_m=J_m(w_{ij}U^l R)=J_m$. 

(3): 
Let $l,l'\in\{0,1,\dots,m-1\}$. 
For $i,j\in\{1,\ldots,n+1\}$ we calculate the $(i,j)$-block of $N_l N_{l'}$ as follows:
\begin{align*}
[N_l N_{l'}]_{ij}&=\sum_{k=1}^{n+1}(\delta_{i,k}J_m+w_{ik}U^l R)(\delta_{j,k}J_m+w_{jk}U^{l'} R)^\top\\
&=\sum_{k=1}^{n+1}\delta_{i,k}\delta_{j,k}mJ_m+\sum_{k=1}^{n+1}\delta_{i,k}w_{jk}^\top J_m+\sum_{k=1}^{n+1}\delta_{j,k}w_{ik} J_m+\sum_{k=1}^{n+1}w_{ik}w_{jk}^\top U^{l-l'} \\
&=\delta_{i,j}mJ_m+w_{ji}^\top J_m+w_{ij} J_m+\left(\delta_{i,j}n I_m+(1-\delta_{i,j})\frac{n-1}{m}J_m\right)U^{l-l'} \\
&=\delta_{i,j}mJ_m+2(1-\delta_{i,j})J_m+\delta_{i,j}n U^{l-l'}+(1-\delta_{i,j})\frac{n-1}{m}J_m \\
&=\begin{cases}
mJ_m+n U^{l-l'} &\text{ if } i=j,\\
(2+\frac{n-1}{m})J_m & \text{ if } i\neq j,
\end{cases}
\end{align*}
where we used  the fact that $w_{ij}$ and $U$ commute in second equality.
\end{proof}

\begin{remark}
\begin{enumerate}
\item A \emph{group divisible design with parameters $(v,k,m,n,\lambda_1,\lambda_2)$} is a pair $(V,\mathcal{B})$ where $V$ is a \emph{point set} of $v$ elements and $\mathcal{B}$ is a \emph{block set} of $k$-element
 subsets of $V$ such that the point set $V$ being decomposed into $m$ classes of size $n$ such that two distinct points from the same class are contained in exactly 
$\lambda_1$ blocks, and two points from different classes are contained in exactly $\lambda_2$ blocks. 
A group divisible design is \emph{symmetric} if its dual is also a group divisible design. 
A $(0,1)$-matrix $A$ is the incidence matrix of a symmetric group divisible design if and only if the $(0,1)$-matrix $A$ satisfies 
\begin{align*}
A A^\top=A^\top A=k I_v+\lambda_1(I_m\otimes J_n-I_v)+\lambda_2(J_v-I_m\otimes J_n).
\end{align*}
Theorem~\ref{thm:GDD} shows that each $(0,1)$-matrix $N_l$ is a symmetric group divisible design with parameters $((n+1)m,n+m,n+1,m,m,2+\frac{n-1}{m})$. 
\item If $\lambda:=\lambda_1=\lambda_2$, then the group divisible design is a symmetric  $2$-$(v,k,\lambda)$ design. 
When $m=2+\frac{n-1}{m}$, that is $n=(m-1)^2$, the symmetric group divisible designs $N_0,\ldots,N_{m-1}$ are symmetric $2$-$((n+1)m,n+m,m)$ designs sharing the diagonal blocks $I_{n+1}\otimes J_m$. 
This result is a generalization of \cite[Theorem~5]{KT}. 
\end{enumerate}
\end{remark}

We now construct an association scheme from a symmetric $BGW(n+1,n,n-1)$ with zero diagonal entries over a cyclic group of order $m$. 
Define
\begin{align*}
A_{l,0}=I_{n+1}\otimes U^l,\quad A_{l,1}=N_l-I_{n+1}\otimes J_m
\end{align*}
for $l=0,1,\ldots,m-1$. 
\begin{theorem}\label{thm:as11}
The set of matrices $\{A_{l,0},A_{l,1}\mid l=0,1,\ldots,m-1\}$ forms a non-commutative association scheme of class $2m-1$.
\end{theorem}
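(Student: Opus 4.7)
The plan is to verify the four axioms for the set $\{A_{l,0},A_{l,1}\mid l=0,\ldots,m-1\}$ (total $2m$ matrices, hence class $2m-1$), relying on Theorem~\ref{thm:GDD} together with the key identity $RU = U^{-1}R$ and the fact that each $w_{ij}$ is a power of $U$ and therefore commutes with $U$.

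First I would check the ``$(0,1)$ partition of $J$'' conditions. The matrices $I_{n+1}\otimes U^l$ for $l=0,\ldots,m-1$ are disjoint permutation matrices (since $U$ is a cyclic permutation matrix of order $m$) whose sum is $I_{n+1}\otimes J_m$. The matrices $A_{l,1}=N_l-I_{n+1}\otimes J_m$ are supported on the off-diagonal blocks, where the $N_l$'s are pairwise disjoint by construction, and $\sum_{l}N_l=m(I_{n+1}\otimes J_m)+(J_{n+1}-I_{n+1})\otimes J_m$ using $\sum_{l=0}^{m-1}U^l=J_m$ and $w_{ij}J_m=J_m$. Adding these two totals gives $J_{n+1}\otimes J_m=J_{(n+1)m}$. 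Axiom~(i) is immediate since $A_{0,0}=I_{n+1}\otimes I_m$.

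For axiom~(iii), transposition acts as $A_{l,0}^\top=I_{n+1}\otimes U^{-l}=A_{m-l\bmod m,0}$, while $A_{l,1}^\top=A_{l,1}$ is exactly the symmetry statement of Theorem~\ref{thm:GDD}(1). For axiom~(iv), I would treat the four product types separately. The ``diagonal-diagonal'' products $A_{l,0}A_{l',0}=A_{l+l'\bmod m,0}$ are immediate. For $A_{l,0}A_{l',1}$, I compute $(I_{n+1}\otimes U^l)N_{l'}$ block-by-block: the diagonal blocks $U^l J_m=J_m$ stay, and the off-diagonal block becomes $U^l w_{ij}U^{l'}R=w_{ij}U^{l+l'}R$, so $(I_{n+1}\otimes U^l)N_{l'}=N_{l+l'\bmod m}$ and therefore $A_{l,0}A_{l',1}=A_{l+l'\bmod m,1}$. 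For $A_{l,1}A_{l',0}$, the crucial move is $RU^{l'}=U^{-l'}R$; the off-diagonal block becomes $w_{ij}U^lRU^{l'}=w_{ij}U^{l-l'}R$, giving $A_{l,1}A_{l',0}=A_{l-l'\bmod m,1}$. Finally, for $A_{l,1}A_{l',1}$, expand
\begin{align*}
A_{l,1}A_{l',1}=N_lN_{l'}-N_l(I_{n+1}\otimes J_m)-(I_{n+1}\otimes J_m)N_{l'}+(I_{n+1}\otimes J_m)^2,
\end{align*}
substitute parts~(2) and~(3) of Theorem~\ref{thm:GDD}, and use $(J_{n+1}-I_{n+1})\otimes J_m=\sum_{l''=0}^{m-1}A_{l'',1}$ to obtain
\begin{align*}
A_{l,1}A_{l',1}=n\,A_{l-l'\bmod m,0}+\tfrac{n-1}{m}\sum_{l''=0}^{m-1}A_{l'',1},
\end{align*}
a linear combination of the $A$'s as required.

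The non-commutativity is immediate from the two cross-product formulas: $A_{l,0}A_{l',1}=A_{l+l'\bmod m,1}$ whereas $A_{l',1}A_{l,0}=A_{l'-l\bmod m,1}$, which disagree whenever $2l\not\equiv 0\pmod m$ (so in particular for $m\geq 3$). The main obstacle is purely bookkeeping in the four block-level product calculations; the one place where care is required is the asymmetric identity $RU^{l'}=U^{-l'}R$, which is precisely the source of the non-commutativity and explains why the scheme has $2m-1$ classes rather than collapsing to a commutative one.
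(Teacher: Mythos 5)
Your proof is correct and follows essentially the same route as the paper's: the same verification of axioms (i)--(iv) via the four product types, derived from Theorem~\ref{thm:GDD} and the relation $RU^{l'}=U^{-l'}R$, culminating in $A_{l,1}A_{l',1}=nA_{l-l',0}+\frac{n-1}{m}\sum_{l''=0}^{m-1}A_{l'',1}$. You are in fact slightly more thorough than the paper, which declares (i)--(iii) ``clearly satisfied'' and never explicitly verifies non-commutativity, whereas you locate it in the cross products and correctly note that it requires some $l$ with $2l\not\equiv 0\pmod m$, i.e., $m\geq 3$.
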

\begin{proof}
The conditions (i)--(iii) in the definition of association schemes are clearly satisfied. 
We need to show that the condition (iv) in the definition of association schemes is satisfied. 
It is easy to see that  
\begin{align*}
A_{l,0} A_{l',0}=A_{l+l',0},\quad A_{l,0} A_{l',1}&=A_{l+l',1},\quad A_{l,1} A_{l',0}=A_{l-l',1}, 
\end{align*}
where the addition and subtraction of indices are taken in modulo $m$. 
Finally we calculate $A_{l,1} A_{l',1}$ for $l,l'\in\{0,1,\ldots,m-1\}$. 
By Theorem~\ref{thm:GDD}, 
\begin{align*}
A_{l,1}A_{l',1}&=(N_l-I_{n+1}\otimes J_m)(N_{l'}-I_{n+1}\otimes J_m)\\
&=N_l N_{l'}-N_l (I_{n+1}\otimes J_m)-(I_{n+1}\otimes J_m)N_{l'}+mI_{n+1}\otimes J_m\\
&=(I_{n+1}\otimes (mJ_m+n U^{l-l'})+(2+\frac{n-1}{m})(J_{n+1}-I_{n+1})\otimes J_m)\\
&\quad \quad -2(mI_{n+1}\otimes J_m+(J_{n+1}-I_{n+1})\otimes J_m)+mI_{n+1}\otimes J_m\\
&=n I_{n+1}\otimes U^{l-l'}+\frac{n-1}{m}(J_{n+1}-I_{n+1})\otimes J_m\\
&=n A_{l-l',0}+\frac{n-1}{m}(A_{0,1}+\cdots+A_{m-1,1}). 
\end{align*}
Thus the condition (iv) is satisfied. 
\end{proof}


We view the cyclic group $C_m$ of order $m$ as the additive group $\mathbb{Z}_m$. 
Let $w=e^{2\pi\sqrt{-1}/m}$. 
For $\alpha,\beta\in\mathbb{Z}_m$, 
the irreducible character denoted $\chi_{\beta}$ is $\chi_{\beta}(\alpha)=w^{\alpha \beta}$. 
The \defn{character table} $K$ of the abelian group $\mathbb{Z}_m$ is an $m\times m$ matrix with rows and columns indexed by the elements of $\mathbb{Z}_m$ with $(\alpha,\beta)$-entry equal to $\chi_{\beta}(\alpha)$. 
Note that $\chi_{\beta}(\alpha)=\chi_{\alpha}(\beta)$. 
Then the Schur orthogonality relation shows that $K K^\top=mI_{m}$.


Define $F_{\alpha,0},F_{\alpha,1}$ as 
\begin{align*}
F_{\alpha,0}=\sum_{\gamma\in\mathbb{Z}_m}\chi_\alpha(\gamma)A_{\gamma,0},\quad F_{\alpha,1}=\sum_{\gamma\in\mathbb{Z}_m}\chi_{\alpha}(\gamma)A_{\gamma,1}.
\end{align*}
Using the intersection numbers described in Theorem~\ref{thm:as11}, the following lemma is easy to see. 
\begin{lemma}\label{lem:F}
The matrices $F_{\alpha,0},F_{\alpha,1}$ ($\alpha\in\mathbb{Z}_m$) satisfy the following equations; for $\alpha,\beta\in\mathbb{Z}_m$, 
\begin{align*}
F_{\alpha,0}F_{\beta,0}&=\delta_{\alpha,\beta}mF_{\alpha,0}, \\
F_{\alpha,1}F_{\beta.1}&=\delta_{\alpha,-\beta}nmF_{\alpha,0}+\delta_{\alpha,0}\delta_{\beta,0}m(n-1)F_{0,1},\\
F_{\alpha,0}F_{\beta,1}&=\delta_{\alpha,\beta}mF_{\alpha,1}, \\
F_{\alpha,1}F_{\beta,0}&=\delta_{\alpha,-\beta}mF_{\alpha,1}. 
\end{align*}
\end{lemma}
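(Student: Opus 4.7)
The plan is to verify each of the four identities by straightforward bilinear expansion, applying the intersection-number relations established in Theorem~\ref{thm:as11}, and then collapsing the resulting double sums via the Schur orthogonality relation for $\mathbb{Z}_m$, namely $\sum_{\gamma\in\mathbb{Z}_m}w^{(\alpha-\beta)\gamma}=m\delta_{\alpha,\beta}$.

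For the first identity, I would write $F_{\alpha,0}F_{\beta,0}=\sum_{\gamma,\delta}\chi_\alpha(\gamma)\chi_\beta(\delta)A_{\gamma+\delta,0}$, reindex by $\epsilon=\gamma+\delta$, and extract the inner sum $\sum_\gamma w^{(\alpha-\beta)\gamma}=m\delta_{\alpha,\beta}$; what remains is $m\delta_{\alpha,\beta}F_{\alpha,0}$. The third and fourth identities are essentially the same calculation: for $F_{\alpha,0}F_{\beta,1}$ one uses $A_{\gamma,0}A_{\delta,1}=A_{\gamma+\delta,1}$ and reindexes by $\epsilon=\gamma+\delta$, yielding $m\delta_{\alpha,\beta}F_{\alpha,1}$; for $F_{\alpha,1}F_{\beta,0}$ one uses $A_{\gamma,1}A_{\delta,0}=A_{\gamma-\delta,1}$ and reindexes by $\epsilon=\gamma-\delta$, which produces the Kronecker delta $\delta_{\alpha,-\beta}$ instead of $\delta_{\alpha,\beta}$.

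The only slightly less routine computation is $F_{\alpha,1}F_{\beta,1}$, because Theorem~\ref{thm:as11} gives
\[
A_{\gamma,1}A_{\delta,1}=nA_{\gamma-\delta,0}+\tfrac{n-1}{m}\sum_{j\in\mathbb{Z}_m}A_{j,1}.
\]
The first piece, after summing against $\chi_\alpha(\gamma)\chi_\beta(\delta)$ and setting $\epsilon=\gamma-\delta$, factors as $n\bigl(\sum_\epsilon\chi_\alpha(\epsilon)A_{\epsilon,0}\bigr)\bigl(\sum_\delta w^{(\alpha+\beta)\delta}\bigr)=nm\delta_{\alpha,-\beta}F_{\alpha,0}$. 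The second piece separates as the product of two independent character sums $\sum_\gamma\chi_\alpha(\gamma)=m\delta_{\alpha,0}$ and $\sum_\delta\chi_\beta(\delta)=m\delta_{\beta,0}$, and since $\sum_j A_{j,1}=F_{0,1}$, the contribution is $m(n-1)\delta_{\alpha,0}\delta_{\beta,0}F_{0,1}$. Adding these gives exactly the stated formula.

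There is no genuine obstacle here; the only thing one has to watch is the bookkeeping of signs in the arguments of $\chi$ (since $\chi_\beta(-\gamma)=w^{-\beta\gamma}$), which determines whether the resulting Kronecker delta is $\delta_{\alpha,\beta}$ or $\delta_{\alpha,-\beta}$. This discrepancy is precisely what distinguishes the left-multiplication and right-multiplication rules, and hence reflects the non-commutativity asserted earlier.
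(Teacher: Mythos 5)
Your proof is correct and follows exactly the route the paper intends: the paper's proof of Lemma~\ref{lem:F} is the one-line remark that the identities follow from the intersection numbers in Theorem~\ref{thm:as11}, and your bilinear expansion together with the orthogonality relation $\sum_{\gamma\in\mathbb{Z}_m}w^{(\alpha-\beta)\gamma}=m\delta_{\alpha,\beta}$ is precisely the computation being left to the reader. Your sign bookkeeping (the $\delta_{\alpha,\beta}$ versus $\delta_{\alpha,-\beta}$ distinction coming from $A_{l,1}A_{l',0}=A_{l-l',1}$) is also handled correctly.
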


For $i\in\{0,1,2,3\},j,k\in\{1,2\},\alpha\in \mathbb{Z}_m$, let $E_i,E_{j,k}^{(\alpha)}$ be  
\begin{align*}
E_0&=\frac{1}{(n+1)m}J_{(n+1)m},\\
E_1&=\frac{1}{(n+1)m}(n F_{0,0}-F_{0,1}),\\
E_2&=\frac{1}{2m}(F_{m/2,0}+\frac{1}{\sqrt{n}}F_{m/2,1}),\\
E_3&=\frac{1}{2m}(F_{m/2,0}-\frac{1}{\sqrt{n}}F_{m/2,1}),\\
E_{1,1}^{(\alpha)}&=\frac{1}{m}F_{\alpha,0}, \quad E_{2,2}^{(\alpha)}=\frac{1}{m}F_{-\alpha,0}, \quad E_{1,2}^{(\alpha)}=\frac{1}{m}F_{\alpha,1}, \quad E_{2,1}^{(\alpha)}=\frac{1}{m}F_{-\alpha,1},  
\end{align*}
where $E_2,E_3$ are defined only for the case $m$ even. 
\begin{theorem}\label{thm:bgwas}
\begin{enumerate}
\item 
If $m$ is even, then the matrices $E_0,E_1,E_2,E_3,E_{1,1}^{(\alpha)},E_{1,2}^{(\alpha)},E_{2,1}^{(\alpha)},E_{2,2}^{(\alpha)}$, $\alpha\in \{1,\ldots,m/2-1\}$, provide the Wedderburn decomposition of the adjacency algebra of the association scheme. 
\item If $m$ is odd, then the matrices $E_0,E_1,E_{1,1}^{(\alpha)},E_{1,2}^{(\alpha)},E_{2,1}^{(\alpha)},E_{2,2}^{(\alpha)}$, $\alpha\in \{1,\ldots,(m-1)/2\}$, provide the Wedderburn decomposition of the adjacency algebra of the association scheme. 
\end{enumerate}
\end{theorem}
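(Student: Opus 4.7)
The plan is to verify, via the multiplication relations in Lemma~\ref{lem:F}, that the listed matrices exhibit the full block structure of the adjacency algebra $\mathcal{A}$. First a dimension count: since $\{A_{\gamma,0}, A_{\gamma,1} : \gamma \in \mathbb{Z}_m\}$ is a basis of $\mathcal{A}$ and the transition to $\{F_{\alpha,\epsilon} : \alpha \in \mathbb{Z}_m, \epsilon \in \{0,1\}\}$ is invertible (via the character matrix $K$, using $KK^\top = mI_m$), the $F_{\alpha,\epsilon}$ also form a basis, so $\dim_{\mathbb{C}} \mathcal{A} = 2m$. The claimed decomposition has total dimension $4 + 4(m/2-1) = 2m$ when $m$ is even and $2 + 4(m-1)/2 = 2m$ when $m$ is odd, so it accounts for the entire algebra.

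For each $\alpha$ with $\alpha \neq -\alpha$ in $\mathbb{Z}_m$, I will use Lemma~\ref{lem:F} to verify that $E_{1,1}^{(\alpha)}$ and $E_{2,2}^{(\alpha)}$ are orthogonal idempotents, that $E_{1,1}^{(\alpha)} E_{1,2}^{(\alpha)} = E_{1,2}^{(\alpha)} E_{2,2}^{(\alpha)} = E_{1,2}^{(\alpha)}$ together with the symmetric analogues, and that $E_{1,2}^{(\alpha)} E_{2,1}^{(\alpha)} = n E_{1,1}^{(\alpha)}$ with $E_{2,1}^{(\alpha)} E_{1,2}^{(\alpha)} = n E_{2,2}^{(\alpha)}$. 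These relations show that $\langle E_{j,k}^{(\alpha)} : j,k \in \{1,2\}\rangle$ is a subalgebra isomorphic to $\text{Mat}_2(\mathbb{C})$, with genuine matrix units $E_{1,1}^{(\alpha)}, E_{2,2}^{(\alpha)}, n^{-1/2} E_{1,2}^{(\alpha)}, n^{-1/2} E_{2,1}^{(\alpha)}$.

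For the special values $\alpha = 0$ and (only when $m$ is even) $\alpha = m/2$, the corresponding two-dimensional subalgebras collapse further into pairs of one-dimensional blocks. Using $F_{0,0}^2 = m F_{0,0}$, $F_{0,0} F_{0,1} = F_{0,1} F_{0,0} = m F_{0,1}$, and $F_{0,1}^2 = nm F_{0,0} + m(n-1) F_{0,1}$, a direct check gives $E_0^2 = E_0$, $E_1^2 = E_1$, $E_0 E_1 = 0$, and $E_0 + E_1 = F_{0,0}/m$. The parallel computation at $\alpha = m/2$, which now uses $\delta_{m/2,-m/2} = 1$ so that $F_{m/2,1}^2 = nm F_{m/2,0}$ with no $F_{0,1}$ term, produces orthogonal idempotents $E_2, E_3$ summing to $F_{m/2,0}/m$; the $\pm 1/\sqrt{n}$ coefficients in their definition are precisely those that split the square root of the identity of this subalgebra.

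Mutual orthogonality across distinct blocks is immediate from Lemma~\ref{lem:F}, which yields $F_{\alpha,\epsilon} F_{\beta,\epsilon'} = 0$ whenever $\{\pm\alpha\} \cap \{\pm\beta\} = \emptyset$, and together with the preceding steps this completes the Wedderburn decomposition. The main technical point is the factor $n$ that appears in $E_{1,2}^{(\alpha)} E_{2,1}^{(\alpha)} = n E_{1,1}^{(\alpha)}$: the matrices $E_{j,k}^{(\alpha)}$ in the statement are a rescaling of genuine matrix units rather than matrix units themselves, and one must argue that the spanned subalgebra is still isomorphic to $\text{Mat}_2(\mathbb{C})$ in order to match the required block form. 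The case split $m$ even versus $m$ odd is an organizational rather than a conceptual difficulty.
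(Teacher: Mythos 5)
Your proposal is correct and takes essentially the same route as the paper, whose entire proof is the one-line assertion that both cases readily follow from Lemma~\ref{lem:F}; you simply carry out the verification that the paper leaves implicit (idempotency and mutual orthogonality at the self-paired indices $\alpha=0$ and $\alpha=m/2$, the $2\times 2$ block relations for $\alpha\neq-\alpha$, cross-block vanishing, and the dimension count $2m$). Your observation that $E_{1,2}^{(\alpha)}E_{2,1}^{(\alpha)}=n\,E_{1,1}^{(\alpha)}$, so that the paper's off-diagonal matrices become genuine matrix units only after rescaling by $1/\sqrt{n}$, is a correct refinement of a normalization subtlety the paper glosses over.
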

\begin{proof}
Both cases readily follow from Lemma~\ref{lem:F}.  
\end{proof}

\begin{remark}
\begin{enumerate}
\item If  $m$ is even, then 
the adjacency algebra is isomorphic to $\oplus_{k=1}^{4+(m-2)/2}\text{Mat}_{d_k}(\mathbb{C})$ where $(d_k)_{k=1}^{4+(m-2)/2}=(1,1,1,1,2,\ldots,2)$
 with  
\begin{align*}
Q_1&=\begin{pmatrix}\chi_0\\ \chi_{0}\end{pmatrix}, Q_2=\begin{pmatrix}n \chi_{0}\\ -\chi_{0}\end{pmatrix},\\
Q_3&=\frac{n+1}{2}\begin{pmatrix}\chi_{m/2}\\ \frac{1}{\sqrt{n}}\chi_{m/2}\end{pmatrix}, Q_4=\frac{n+1}{2}\begin{pmatrix}\chi_{m/2}\\-\frac{1}{\sqrt{n}}\chi_{m/2}\end{pmatrix}, \\
Q_{k+4}&=(n+1)\begin{pmatrix}\chi_k & \bm{0} & \chi_{m-k} & \bm{0} \\ \bm{0} & \chi_k& \bm{0} & \chi_{m-k}\end{pmatrix},
\end{align*} 
for $k=1,\ldots,m/2-1$, where $\bm{0}$ denotes the column zero vector. 
\item If  $m$ is odd, then 
the adjacency algebra is isomorphic to $\oplus_{k=1}^{2+(m-1)/2}\text{Mat}_{d_k}(\mathbb{C})$ where $(d_k)_{k=1}^{2+(m-1)/2}=(1,1,2,\ldots,2)$
 with  
\begin{align*}
Q_1&=\begin{pmatrix}\chi_0\\ \chi_{0}\end{pmatrix}, Q_2=\begin{pmatrix}n \chi_{0}\\ -\chi_{0}\end{pmatrix},\\
Q_{k+2}&=(n+1)\begin{pmatrix}\chi_k & \bm{0} & \chi_{m-k} & \bm{0} \\ \bm{0} & \chi_k& \bm{0} & \chi_{m-k}\end{pmatrix},
\end{align*} 
for $k=1,\ldots,(m-1)/2$.
\end{enumerate}
\end{remark}

As corollaries of Theorem~\ref{thm:as11}, we have the following. 
\begin{corollary}
Let $T$ be the character table of the non-commutative association scheme in Theorem~\ref{thm:as11}.
\begin{enumerate}
\item If $m$ is even, then 
\begin{align*}
T=\begin{pmatrix} 
\chi_0^\top & n\chi_0^\top \\
\chi_0^\top & -\chi_0^\top \\
\chi_{m/2}^\top & \sqrt{n}\chi_{m/2}^\top \\
\chi_{m/2}^\top & -\sqrt{n}\chi_{m/2}^\top \\
\chi_{1}^\top+\chi_{m-1}^\top & \bm{0}^\top \\
\chi_{2}^\top+\chi_{m-2}^\top & \bm{0}^\top \\
\vdots & \vdots \\
\chi_{m/2-1}^\top+\chi_{m/2+1}^\top & \bm{0}^\top \\
\end{pmatrix}.
\end{align*}
\item If $m$ is odd, then 
\begin{align*}
T=\begin{pmatrix} 
\chi_0^\top & n\chi_0^\top \\
\chi_0^\top & -\chi_0^\top \\
\chi_{1}^\top+\chi_{m-1}^\top & \bm{0}^\top \\
\chi_{2}^\top+\chi_{m-2}^\top & \bm{0}^\top \\
\vdots & \vdots \\
\chi_{(m-1)/2}^\top+\chi_{(m+1)/2}^\top & \bm{0}^\top \\
\end{pmatrix}.
\end{align*}

\end{enumerate}
\end{corollary}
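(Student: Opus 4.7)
The strategy is to derive each row of $T$ by computing how each $A_l$ acts on the corresponding Wedderburn summand from Theorem~\ref{thm:bgwas}. I first record the valencies $v_{(l,0)}=1$ (since $A_{l,0}=I_{n+1}\otimes U^l$ is a permutation matrix) and $v_{(l,1)}=n$ (each row of $N_l-I_{n+1}\otimes J_m$ sums to $(n+m)-m$, by Theorem~\ref{thm:GDD}(2)). The first row of $T$ is then just the row of valencies, read off directly from the trivial summand $E_0=J_{(n+1)m}/v$.

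For each remaining one-dimensional summand, the task is to find the scalar $\lambda$ with $A_l E_k=\lambda E_k$. For $E_1=\frac{1}{m}I_{n+1}\otimes J_m-\frac{1}{(n+1)m}J_{n+1}\otimes J_m$, a short direct calculation using $(I_{n+1}\otimes J_m)E_1=mE_1$ and the identity $N_lE_1=(m-1)E_1$ (obtained by noting that $N_l$ acts on the block-wise constant subspace as $(m-1)I_{n+1}+J_{n+1}$, whose restriction to the zero-sum subspace that is the image of $E_1$ is $(m-1)I$) yields $A_{l,0}E_1=E_1$ and $A_{l,1}E_1=-E_1$, producing row~2. For $E_2,E_3$ in the even-$m$ case, I would compute $A_{l,i}F_{m/2,j}$ using the shift relations $A_{l,0}A_{\gamma,j}=A_{l+\gamma,j}$, $A_{l,1}A_{\gamma,0}=A_{l-\gamma,1}$, and $A_{l,1}A_{\gamma,1}=nA_{l-\gamma,0}+\frac{n-1}{m}\sum_{\gamma'}A_{\gamma',1}$ from Theorem~\ref{thm:as11}: a change of summation variable produces the factor $\chi_{m/2}(l)=(-1)^l$, and the $\frac{n-1}{m}$ term is annihilated by $\sum_\gamma(-1)^\gamma=0$ (which requires $m$ even). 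A short calculation then gives $A_{l,0}E_j=\chi_{m/2}(l)E_j$ and $A_{l,1}E_j=\pm\sqrt{n}\,\chi_{m/2}(l)E_j$ for $j\in\{2,3\}$, yielding rows~3 and~4 with the stated $\pm\sqrt{n}$.

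For each two-dimensional summand indexed by $\alpha$, the entry $T_{k,l}$ equals the trace of $\varphi_\alpha(A_l)$ in $\mathrm{Mat}_2(\mathbb{C})$, namely $p_{(1,1),l}^{(\alpha)}+p_{(2,2),l}^{(\alpha)}$. Applying Proposition~\ref{prop:ct}(2) with the $q_{l,(i,i)}^{(\alpha)}$ values read off from $E_{1,1}^{(\alpha)}=\frac{1}{m}F_{\alpha,0}$ and $E_{2,2}^{(\alpha)}=\frac{1}{m}F_{-\alpha,0}$ gives $p_{(i,j),l}^{(\alpha)}=v_l\,\overline{q_{l,(i,j)}^{(\alpha)}}/m_\alpha$ with $m_\alpha=n+1$ (the rank of the idempotent $E_{1,1}^{(\alpha)}$, obtained as $\mathrm{tr}\,\frac{1}{m}F_{\alpha,0}$). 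Because $E_{1,1}^{(\alpha)}$ and $E_{2,2}^{(\alpha)}$ are supported on the $A_{\gamma,0}$'s only, every $p_{(i,i),(\gamma,1)}^{(\alpha)}$ vanishes, and the surviving entries sum to $\overline{\chi_\alpha(\gamma)}+\overline{\chi_{m-\alpha}(\gamma)}=\chi_{m-\alpha}(\gamma)+\chi_\alpha(\gamma)$, exactly the stated row. The odd-$m$ case is handled identically, with the $E_2,E_3$ block simply absent.

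The main technical difficulty is the sign-tracking in the $E_2,E_3$ eigenvalue computation, where the cancellation $\sum_\gamma\chi_{m/2}(\gamma)=0$ (valid only for $m$ even) must be used to kill the $\frac{n-1}{m}\sum A_{\gamma',1}$ term arising from $A_{l,1}A_{\gamma,1}$; once that is in hand, all remaining steps are routine substitutions into the explicit idempotents of Theorem~\ref{thm:bgwas}.
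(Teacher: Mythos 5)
Your proof is correct and takes essentially the same route as the paper: the paper's one-line proof likewise derives $T$ by applying Proposition~\ref{prop:ct} to the explicit Wedderburn decomposition of Theorem~\ref{thm:bgwas}, and your direct eigenvalue computations on $E_0,E_1,E_2,E_3$ together with Proposition~\ref{prop:ct}(2) on the two-dimensional summands (with $m_\alpha=n+1$, $v_{(l,0)}=1$, $v_{(l,1)}=n$) simply make that citation explicit. One point in your favour: you use the normalization $T_{k,l}=\mathrm{tr}\,\varphi_k(A_l)=\sum_i p_{(i,i),l}^{(k)}$, which is the one consistent with the table as stated, whereas the extra factor $m_k$ in Proposition~\ref{prop:ct}(1) stems from taking traces of the matrices $E_{i,j}^{(k)}$ themselves rather than of $\varphi_k(E_{i,j}^{(k)})$ and would otherwise rescale every row from the second onward.
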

\begin{proof}
This follows from Proposition~\ref{prop:ct} and Theorem~\ref{thm:as11}. 
\end{proof}
\begin{corollary}
\begin{enumerate}
\item If $m$ is even, then the set of matrices $\{A_{0,0},A_{i,0}+A_{m-i,0},A_{m/2,0},A_{0,1},A_{i,1}+A_{m-i,1},A_{m/2,1}\mid i=1,\ldots,m/2-1\}$ is a symmetric association scheme with the second eigenmatrix $Q$ 
\begin{align*}
Q=\begin{pmatrix}\chi_0 & n\chi_0 & \frac{n+1}{2}\chi_{m/2} & \frac{n+1}{2}\chi_{m/2}   &\cdots & \frac{n+1}{2}(\chi_{k}+\chi_{m-k}) &\cdots \\ 
\chi_0 & -\chi_0 & \frac{n+1}{2\sqrt{n}}\chi_{m/2} & -\frac{n+1}{2\sqrt{n}}\chi_{m/2}  &\cdots & \frac{n+1}{2}(\chi_{k}+\chi_{m-k}) & \cdots \end{pmatrix},
\end{align*} 
where $k$ runs over $\{1,\ldots,m/2-1\}$. 
\item If $m$ is odd, then the set of matrices $\{A_{0,0},A_{i,0}+A_{m-i,0},A_{0,1},A_{i,1}+A_{m-i,1}\mid i=1,\ldots,(m-1)/2\}$ is a symmetric association scheme with the second eigenmatrix $Q$ 
\begin{align*}
Q=\begin{pmatrix}\chi_0 & n\chi_0 &\cdots & \frac{n+1}{2}(\chi_{k}+\chi_{m-k}) &\cdots \\ 
\chi_0 & -\chi_0  &\cdots & \frac{n+1}{2}(\chi_{k}+\chi_{m-k}) & \cdots \end{pmatrix},
\end{align*} 
where $k$ runs over $\{1,\ldots,(m-1)/2\}$. 
\end{enumerate}
\end{corollary}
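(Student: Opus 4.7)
The approach is to produce an explicit complete family of primitive idempotents for the candidate fusion using the Wedderburn decomposition of Theorem~\ref{thm:bgwas}, and then to identify the span of these idempotents with the span of the proposed fusion classes. First, each proposed class is symmetric: $A_{l,1}$ is symmetric by Theorem~\ref{thm:GDD}(1); $A_{0,0}=I_{(n+1)m}$ trivially; $A_{m/2,0}=I_{n+1}\otimes U^{m/2}$ (present only when $m$ is even) since $(U^{m/2})^{\top}=U^{-m/2}=U^{m/2}$; and each sum $A_{i,0}+A_{m-i,0}=I_{n+1}\otimes(U^{i}+U^{-i})$ by direct inspection. Next, the one-dimensional components $E_0,E_1$ (and $E_2,E_3$ when $m$ is even) are already real and symmetric, so they remain primitive idempotents in any symmetric sub-scheme containing them. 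To obtain primitive idempotents from the two-dimensional blocks indexed by $\alpha$, I would set
\[
G_\pm^{(\alpha)}=\tfrac{1}{2}\bigl(E_{1,1}^{(\alpha)}+E_{2,2}^{(\alpha)}\pm(E_{1,2}^{(\alpha)}+E_{2,1}^{(\alpha)})\bigr).
\]
The matrix-unit relations $E_{i,j}^{(k)}E_{i',j'}^{(k')}=\delta_{k,k'}\delta_{j,i'}E_{i,j'}^{(k)}$ and $(E_{i,j}^{(k)})^{*}=E_{j,i}^{(k)}$ yield, after a short computation, that the $G_\pm^{(\alpha)}$ are real symmetric orthogonal idempotents with $G_{+}^{(\alpha)}+G_{-}^{(\alpha)}=E_{1,1}^{(\alpha)}+E_{2,2}^{(\alpha)}$. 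Collectively these give $m+2$ (resp.\ $m+1$) mutually orthogonal symmetric idempotents summing to $I_{(n+1)m}$ and spanning a commutative subalgebra $\mathcal{A}'$ of the adjacency algebra of dimension $m+2$ (resp.\ $m+1$).

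The next step is to show that $\mathcal{A}'$ coincides with the linear span of the proposed fusion classes. By Fourier inversion on $\mathbb{Z}_m$, every $A_{\gamma,\epsilon}$ with $\epsilon\in\{0,1\}$ is a linear combination of the $F_{\alpha,\epsilon}$, and pairing $\gamma$ with $m-\gamma$ shows that each fusion class $A_{i,\epsilon}+A_{m-i,\epsilon}$ is a real combination of $F_{0,\epsilon}$, of $F_{m/2,\epsilon}$ (when $m$ is even), and of the symmetrized sums $F_{\alpha,\epsilon}+F_{-\alpha,\epsilon}$. All of these lie in $\mathcal{A}'$ by construction, so the span of the fusion classes is contained in $\mathcal{A}'$; a dimension count then forces equality. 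Since $\mathcal{A}'$ is spanned by a complete orthogonal family of idempotents, it is automatically a subalgebra, and therefore the fusion classes are closed under matrix multiplication. Non-negativity and integrality of the intersection numbers are automatic because the fusion classes are disjoint $(0,1)$-matrices. This yields the desired symmetric association scheme; the second eigenmatrix $Q$ is then read off by expressing each primitive idempotent as $\tfrac{1}{v}\sum_{l}q_{l,j}A'_{l}$ in the fusion basis.

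I expect the main obstacle to be the bookkeeping in this final step: one must carefully handle the boundary indices $\gamma=0$ and (when $m$ is even) $\gamma=m/2$ so that the doubling from pairing $\gamma\leftrightarrow m-\gamma$ produces the uniform coefficient $\chi_{\alpha}+\chi_{m-\alpha}$ in the columns of $Q$ indexed by $G_\pm^{(\alpha)}$, while the factors $\pm(n+1)/(2\sqrt{n})$ emerge correctly on the type-$1$ classes for the columns indexed by $E_2$ and $E_3$. The identities themselves are routine consequences of the Schur orthogonality relation $KK^{\top}=mI_m$, but tracking them with uniform conventions on both the even and odd $m$ cases requires care.
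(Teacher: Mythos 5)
Your proposal is correct in its main line, and it takes a genuinely different route from the paper: the paper disposes of this corollary in one line by citing Theorem~\ref{thm:BM}, whereas you bypass the fusion criterion and construct the primitive idempotents of the fused algebra directly, splitting each two-dimensional Wedderburn block via $G_{\pm}^{(\alpha)}$ and matching spans by Fourier inversion plus a dimension count. This buys more than elegance. Theorem~\ref{thm:BM} only admits \emph{canonical} (product) partitions $I_a^{(k)}\times I_b^{(k)}$, and the span of the corresponding block sums is isomorphic to $\oplus_k \mathrm{Mat}_{f_k}(\mathbb{C})$; since the present fusion is symmetric, hence commutative, a canonical partition would force every $f_k=1$, giving dimension $4+(m-2)/2$ (resp.\ $2+(m-1)/2$), which never equals $e+1=m+2$ (resp.\ $m+1$) once $m>2$. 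Concretely, for $m=4$ one has $\sum_k f_k^2\in\{5,8\}$ while $e+1=6$. The splitting of each $2\times 2$ block into two one-dimensional blocks that this fusion requires corresponds to the non-product partition $\{\{(1,1),(2,2)\},\{(1,2),(2,1)\}\}$ of $\{1,2\}^2$, so the criterion as stated does not literally apply; your explicit idempotents supply exactly the missing ingredient (equivalently, they prove the needed extension of Theorem~\ref{thm:BM} to partitions of $\{1,\ldots,d_k\}^2$ whose partial sums of matrix units are closed under multiplication).

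Two corrections to your execution. First, with the paper's normalization $E_{1,2}^{(\alpha)}=\frac{1}{m}F_{\alpha,1}$ and $E_{2,1}^{(\alpha)}=\frac{1}{m}F_{-\alpha,1}$, Lemma~\ref{lem:F} gives $E_{1,2}^{(\alpha)}E_{2,1}^{(\alpha)}=n\,E_{1,1}^{(\alpha)}$, so the matrix-unit relations you invoke hold only after rescaling $E_{1,2}^{(\alpha)},E_{2,1}^{(\alpha)}$ by $1/\sqrt{n}$; accordingly your idempotents must be $G_{\pm}^{(\alpha)}=\frac{1}{2m}\left((F_{\alpha,0}+F_{-\alpha,0})\pm\frac{1}{\sqrt{n}}(F_{\alpha,1}+F_{-\alpha,1})\right)$, in exact parallel with $E_2,E_3$ (as written, with the paper's scaling, your $G_{\pm}^{(\alpha)}$ are not idempotent). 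Second, as a consequence, your expectation of a uniform coefficient $\chi_{\alpha}+\chi_{m-\alpha}$ in the columns indexed by $G_{\pm}^{(\alpha)}$ is not what the computation yields: each $k$ contributes \emph{two} columns of $Q$, equal to $\frac{n+1}{2}(\chi_k+\chi_{m-k})$ on the type-$0$ classes and to $\pm\frac{n+1}{2\sqrt{n}}(\chi_k+\chi_{m-k})$ on the type-$1$ classes. This is forced by counting: the fusion has $m+2$ (resp.\ $m+1$) classes, hence that many primitive idempotents and columns of $Q$, and distinct primitive idempotents cannot share a column. With these adjustments your argument completely proves that the fused classes form a symmetric association scheme; note that it then also shows the $k$-columns of the $Q$ displayed in the corollary must be read as such $\pm$ pairs rather than as the single column with equal entries in both rows.
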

\begin{proof}
The results follow from Theorem~\ref{thm:BM}.
\end{proof}

\section{Generalized Hadamard matrices}\label{sec:GH}
In \cite{KSS}, Kharaghani, Sasani and Suda considered symmetric association schemes attached to the finite fields of characteristic two. 
In this section, we consider non-commutative association schemes attached to the finite fields of odd characteristic.  

Let $q=p^m$ be an odd prime power with $p$ an odd prime.  
We denote by $\mathbb{F}_q$ the finite field of $q$ elements.   
Let $H_q$ be the multiplicative table of $\mathbb{F}_q$, i.e., $H_q$ is a $q\times q$ matrix with rows and columns indexed by the elements of $\mathbb{F}_q$ with $(\alpha,\beta)$-entry equal to $\alpha \cdot \beta$. 
Then the matrix $H_q$ is a generalized Hadamard matrix with parameters $(q,1)$ over the additive group of $\mathbb{F}_q$. 

Let $\phi$ be a permutation representation of the additive group of $\mathbb{F}_q$ defined as follows.  
Since $q=p^m$, we view the additive group of $\mathbb{F}_q$ as $\mathbb{F}_p^m$. 
Again let $U$ be the circulant matrix of order $p$ with the first row $(0,1,0,\ldots,0)$, and a group homomorphism $\phi:\mathbb{Z}_{p}^m\rightarrow GL_{q}(\mathbb{R})$ as $\phi((x_i)_{i=1}^m)= \otimes_{i=1}^m U^{x_i}$.   

From the generalized Hadamard matrix $H_q$ and the permutation representation $\phi$, we construct $q^2$ auxiliary matrices; 
for each $\alpha,\alpha'\in \mathbb{F}_q$, define a $q^2\times q^2$ $(0,1)$-matrix $C_{\alpha,\alpha'}$ to be 
\begin{align*}
C_{\alpha,\alpha'}=(\phi(\alpha(-\beta+\beta')+\alpha'))_{\beta,\beta'\in\mathbb{F}_q}. 
\end{align*}
Letting $x$ be an indeterminate, we define $C_{x,\alpha}$ by $C_{x,\alpha}=J_{q^2}-I_q\otimes J_q$ for $\alpha\in\mathbb{F}_q$. 

It is known that a symmetric Latin square of order $v$ with constant diagonal exists for any positive even integer $v$, see \cite{K}. 
Let $L=(L(a,a'))_{a,a'\in S}$ be a symmetric Latin square of order $q+1$ on the symbol set $S=\mathbb{F}_q\cup\{x\}$ with  constant diagonal $x$. 
Write $L$ as $L=\sum_{a\in S}a\cdot P_a$, where $P_a$ is a symmetric permutation matrix of order $q+1$. 
Note that $P_x=I_{q+1}$. 

From the $(0,1)$-matrices $C_{\alpha,\alpha'}$'s and the Latin square $L$, we construct divisible design graphs \cite{HKM}, that is a symmetric group divisible designs which is adjacency matrices of a graph, as follows. 
Let $R$ be the back identity matrix of order $q^2$.
For $\alpha \in \mathbb{F}_q$, we define a $(q+1)q^2\times (q+1)q^2$ $(0,1)$-matrix $N_{\alpha}$ to be 
\begin{align*}
N_\alpha=(C_{L(a,a'),\alpha}(\delta_{a,a'}I_{q^2}+(1-\delta_{a,a'})R))_{a,a'\in S}=I_{q+1}\otimes (J_{q^2}-I_q\otimes J_q)+\sum_{a\in \mathbb{F}_q} P_a\otimes C_{a,\alpha}R. 
\end{align*}
In order to show that each $N_{\alpha}$ is a divisible design graph and study more properties, we prepare a lemma on $C_{\alpha,\alpha'}$ and $P_a$.  
\begin{lemma}\label{lem:1}
\begin{enumerate}
\item For $\alpha\in\mathbb{F}_q$, $\sum_{a\in\mathbb{F}_q}C_{a,\alpha}=q I_q\otimes \phi(\alpha)+(J_q-I_q)\otimes J_q$. 
\item For $a\in\mathbb{F}_q$ and $\alpha,\alpha'\in\mathbb{F}_q$,
$C_{a,\alpha}C_{a,\alpha'}=q C_{a,\alpha+\alpha'}$.
\item For distinct $a,a'\in\mathbb{F}_q$ and $\alpha,\alpha'\in\mathbb{F}_q$, $C_{a,\alpha}C_{a',\alpha'}=J_{q^2}$.
\item $(J_{q^2}-I_q\otimes J_q)C_{a,\alpha}=C_{a,\alpha}(J_{q^2}-I_q\otimes J_q)=(q-1)J_{q^2}$. 
\item $C_{a,\alpha}R=R C_{a,-\alpha}$. 
\item $\sum_{a,b\in \mathbb{F}_q,a\neq b} P_{a}P_{b}=(q-1)(J_{q+1}-I_{q+1})$. 
\end{enumerate}
\end{lemma}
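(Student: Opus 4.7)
The plan is to verify each of the six identities by a direct computation at the block or entry level, relying on three elementary facts: (i) $\phi$ is a group homomorphism, so $\phi(z)\phi(z')=\phi(z+z')$ and each row or column of the permutation matrix $\phi(z)$ sums to $1$; (ii) $\sum_{y\in\mathbb{F}_q}\phi(y)=J_q$, because $\phi$ gives the regular representation of the additive group $(\mathbb{F}_q,+)$; and (iii) each $P_a$ is a symmetric permutation matrix, hence an involution $P_a^2=I_{q+1}$, while $\sum_{a\in S}P_a=J_{q+1}$ because $L$ is a Latin square on the symbol set $S=\mathbb{F}_q\cup\{x\}$.

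For parts (1)--(3) I work block by block, using the $(\beta,\beta')$-decomposition of the $q^2\times q^2$ matrices. For (1), the $(\beta,\beta')$-block of $\sum_{a\in\mathbb{F}_q}C_{a,\alpha}$ is $\sum_{a\in\mathbb{F}_q}\phi(a(\beta'-\beta)+\alpha)$: when $\beta=\beta'$ it collapses to $q\phi(\alpha)$, and when $\beta\neq\beta'$ the map $a\mapsto a(\beta'-\beta)$ is a bijection on $\mathbb{F}_q$, so fact (ii) yields $J_q$. For (2), the $(\beta,\beta')$-block of $C_{a,\alpha}C_{a,\alpha'}$ is $\sum_{\gamma\in\mathbb{F}_q}\phi(a(\gamma-\beta)+\alpha)\phi(a(\beta'-\gamma)+\alpha')$; the $\gamma$-dependence cancels by (i), leaving $q\phi(a(\beta'-\beta)+\alpha+\alpha')$, which matches the $(\beta,\beta')$-block of $qC_{a,\alpha+\alpha'}$. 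For (3), the analogous sum becomes $\sum_{\gamma\in\mathbb{F}_q}\phi((a-a')\gamma+c)$ for a constant $c$; since $a-a'\neq 0$, another application of (ii) forces every block to equal $J_q$, so the product is $J_{q^2}$.

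For (4), $J_{q^2}C_{a,\alpha}=qJ_{q^2}$ since each row of $C_{a,\alpha}$ contains exactly $q$ ones, and $(I_q\otimes J_q)C_{a,\alpha}=J_{q^2}$ because $J_q\phi(z)=J_q$ for every $z$; subtracting yields $(q-1)J_{q^2}$, and the right-multiplication case is symmetric. For (5), I fix an ordering of $\mathbb{F}_q$ in which the $i$-th and $(q+1-i)$-th elements are negatives of each other (possible since $q$ is odd, with $0$ sitting in the middle) and extend lexicographically to $\mathbb{F}_q\times\mathbb{F}_q$; under this labeling $R$ coincides with the negation map $(\beta,\gamma)\mapsto(-\beta,-\gamma)$, and the definition of $C_{a,\alpha}$ gives $(C_{a,\alpha}R)_{(\beta,\gamma),(\beta',\gamma')}=1$ iff $-\gamma'-\gamma=a(-\beta'-\beta)+\alpha$, which upon negating is the support condition for $(RC_{a,-\alpha})_{(\beta,\gamma),(\beta',\gamma')}$. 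For (6), the Latin square identity gives $\sum_{a\in\mathbb{F}_q}P_a=J_{q+1}-I_{q+1}$; squaring and using $P_a^2=I_{q+1}$ yields $\sum_{a\neq b}P_aP_b=(J_{q+1}-I_{q+1})^2-qI_{q+1}=(q-1)(J_{q+1}-I_{q+1})$.

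The only genuine subtlety is arranging an indexing convention under which the back-identity $R$ really implements negation on $\mathbb{F}_q\times\mathbb{F}_q$; once this is in place, every remaining manipulation is a single-line identity either inside the regular representation of $(\mathbb{F}_q,+)$ or inside the Latin-square decomposition of $L$.
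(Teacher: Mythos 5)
Your proof is correct and follows essentially the same route as the paper: your parts (1)--(3) are exactly the paper's own entrywise/blockwise computations (collapse at $\beta=\beta'$, the substitution $\gamma\mapsto\gamma(\beta'-\beta)$ resp.\ $\gamma\mapsto(a-a')\gamma$ for the off-diagonal blocks), the paper dismisses (4) and (5) as ``easy to see,'' and your part (6) is a trivial rearrangement of the paper's Latin-square computation $\sum_{a\in\mathbb{F}_q}P_a(J_{q+1}-I_{q+1}-P_a)$, using the same ingredients $\sum_{a\in S}P_a=J_{q+1}$, $P_x=I_{q+1}$, $P_a^2=I_{q+1}$, $P_aJ_{q+1}=J_{q+1}$.

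One small caveat on your handling of (5): the ordering of the inner index is not free, since it is pinned down by $\phi((x_i)_{i=1}^m)=\otimes_{i=1}^m U^{x_i}$ (lexicographic on $\mathbb{Z}_p^m$ with $0$ first), so you cannot relabel that coordinate negation-symmetrically; under the fixed ordering the back-identity reverses the inner index as $u\mapsto -u-(1,\ldots,1)$, an affine negation rather than pure negation. Your argument nevertheless survives intact, because the constant shift appears on both sides of the support condition and cancels when you negate --- equivalently, one can bypass the labeling discussion altogether via $UR_p=R_pU^{-1}$, which gives $\phi(z)R_q=R_q\phi(-z)$ and hence (5) blockwise, provided (as you correctly require) the outer $\mathbb{F}_q$-ordering is chosen so that reversal acts as $\beta\mapsto t-\beta$ for some constant $t$ (the natural lexicographic ordering already does this with $t=(p-1,\ldots,p-1)$). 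Your explicit attention to this indexing convention is in fact more careful than the paper, which leaves (5) unproved.
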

\begin{proof}
(1): For $\alpha,\beta,\beta'\in\mathbb{F}_q$, the $(\beta,\beta')$-entry of $\sum_{\gamma\in\mathbb{F}_q}C_{\gamma,\alpha}$ is 
\begin{align*}
\sum_{\gamma\in\mathbb{F}_q}\phi(\gamma(-\beta+\beta')+\alpha)&=\begin{cases}\sum_{\gamma\in\mathbb{F}_q}\phi(\alpha) & \text{ if } \beta=\beta' \\ \sum_{\gamma'\in\mathbb{F}_q}\phi(\gamma'+\alpha) & \text{ if } \beta\neq \beta' \end{cases}\\
&=\begin{cases}q\phi(\alpha) & \text{ if } \beta=\beta', \\ J_q & \text{ if } \beta\neq \beta', \end{cases}
\end{align*}
which yields $\sum_{a\in\mathbb{F}_q}C_{a,\alpha}=q I_q\otimes \phi(\alpha)+(J_q-I_q)\otimes J_q$. 

(2):  
For $a,\beta,\beta'\in\mathbb{F}_q$, the $(\beta,\beta')$-entry of $C_{a,\alpha}C_{a,\alpha'}$ is 
\begin{align*}
\sum_{\gamma\in\mathbb{F}_q}\phi(a(-\beta+\gamma)+\alpha)\phi(a(-\gamma+\beta')+\alpha')
&=\sum_{\gamma\in\mathbb{F}_q}\phi(a(-\beta+\beta')+\alpha+\alpha')\\
&=q\phi(a(-\beta+\beta')+\alpha+\alpha'). 
\end{align*}
Thus we have $C_{a,\alpha}C_{a,\alpha'}=q C_{a,\alpha+\alpha'}$. 

(3): It follows from a similar calculation to (ii) with the fact that $ \{(a-a')\gamma\mid \gamma\in\mathbb{F}_q\}=\mathbb{F}_q$. 

(4) and (5) are easy to see, 
and (6) follows from the equations below. Recall that $S=\mathbb{F}_q\cup\{x\}$. 
\begin{align*}
\sum_{a,b\in \mathbb{F}_q,a\neq b} P_{a}P_{b}&=\sum_{a\in\mathbb{F}_q} P_{a}(\sum_{b\in S\setminus\{x,a\}}P_{b})\\
&=\sum_{a\in\mathbb{F}_q} P_{a}(J_{q+1}-I_{q+1}-P_{a})\\
&=\sum_{a\in\mathbb{F}_q} (J_{q+1}-P_{a}-I_{q+1})\\
&=q(J_{q+1}-I_{q+1})-\sum_{a\in\mathbb{F}_q}P_{a}\\
&=(q-1)(J_{q+1}-I_{q+1}). \qedhere
\end{align*}
\end{proof}

Now we are ready to prove the results for $N_{\alpha}$'s. 
\begin{theorem}\label{thm:1}
\begin{enumerate}
\item For any $\alpha\in\mathbb{F}_q$, $N_{\alpha}$ is symmetric. 
\item For any $\alpha,\beta\in\mathbb{F}_q$, 
\begin{align*}
N_{\alpha} N_{\beta}
= q^2 I_{q+1}\otimes I_q\otimes \phi(\alpha-\beta)+(q^2-4q+3) I_{q+1}\otimes J_{q^2}+3(q-1)J_{(q+1)q^2}. 
\end{align*}
In particular, $N_{\alpha}^2= q^2 I_{(q+1)q^2}+(q^2-4q+3) I_{q+1}\otimes J_{q^2}+3(q-1)J_{(q+1)q^2}$. 
\end{enumerate}
\end{theorem}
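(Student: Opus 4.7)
For part~(1), I would first observe that since the Latin square $L$ is symmetric, the $(a,a')$- and $(a',a)$-blocks of $N_\alpha$ carry the same matrix, namely $J_{q^2}-I_q\otimes J_q$ on the diagonal and $C_{L(a,a'),\alpha}R$ off the diagonal. Symmetry of $N_\alpha$ therefore reduces to symmetry of each constituent block. The diagonal block is manifestly symmetric, so it suffices to check that each $C_{a,\alpha}R$ is symmetric. For this I would verify entrywise that $C_{a,\alpha}^\top = C_{a,-\alpha}$, using $\phi(x)^\top=\phi(-x)$ (which in turn comes from $U^\top=U^{-1}$), and then combine with $R^\top=R$ and Lemma~\ref{lem:1}(5) to conclude $(C_{a,\alpha}R)^\top = R\,C_{a,-\alpha} = C_{a,\alpha}R$.

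For part~(2), I would expand the product from the representation
\[
N_\alpha = I_{q+1}\otimes M + \sum_{a\in\mathbb{F}_q} P_a\otimes C_{a,\alpha}R, \qquad M:=J_{q^2}-I_q\otimes J_q,
\]
which produces four groups of terms. The diagonal $I_{q+1}\otimes M^2$ is computed directly from $J_{q^2}^2 = q^2 J_{q^2}$, $J_{q^2}(I_q\otimes J_q)=qJ_{q^2}$, and $(I_q\otimes J_q)^2 = qI_q\otimes J_q$. Each of the two cross terms collapses via Lemma~\ref{lem:1}(4) together with $J_{q^2}R=RJ_{q^2}=J_{q^2}$ to $(q-1)J_{q^2}$, so after summing and using $\sum_{a\in\mathbb{F}_q} P_a = J_{q+1}-I_{q+1}$ they contribute $2(q-1)(J_{q+1}-I_{q+1})\otimes J_{q^2}$.

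The principal term $\sum_{a,b}P_aP_b\otimes C_{a,\alpha}RC_{b,\beta}R$ is where Lemma~\ref{lem:1}(5) and $R^2=I$ do the work: $RC_{b,\beta}R=C_{b,-\beta}$, so the inner factor reduces to $C_{a,\alpha}C_{b,-\beta}$. Splitting on $a=b$ versus $a\neq b$ and applying Lemma~\ref{lem:1}(2), (3), (6) together with $P_a^2=I_{q+1}$ gives
\[
qI_{q+1}\otimes \sum_{a\in\mathbb{F}_q}C_{a,\alpha-\beta} + (q-1)(J_{q+1}-I_{q+1})\otimes J_{q^2},
\]
and Lemma~\ref{lem:1}(1) rewrites $\sum_a C_{a,\alpha-\beta}$ as $qI_q\otimes\phi(\alpha-\beta)+M$, producing the $q^2 I_{q+1}\otimes I_q\otimes\phi(\alpha-\beta)$ piece plus an extra $qI_{q+1}\otimes M$. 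Finally I would use $M^2+qM=q(q-1)J_{q^2}$ and collect the purely-$J$ contributions: $q(q-1)I_{q+1}\otimes J_{q^2}+3(q-1)(J_{q+1}-I_{q+1})\otimes J_{q^2}$ rearranges to $(q^2-4q+3)I_{q+1}\otimes J_{q^2}+3(q-1)J_{(q+1)q^2}$, giving the stated identity; the formula for $N_\alpha^2$ then follows by setting $\beta=\alpha$ and noting $\phi(0)=I_q$.

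The main obstacle is purely bookkeeping: keeping the three Kronecker factors straight while repeatedly swapping the back-identity $R$ past the $C_{a,\alpha}$'s via Lemma~\ref{lem:1}(5). Choosing the right order of application --- in particular, simplifying $C_{a,\alpha}RC_{b,\beta}R$ using $R^2=I$ before invoking (2) and (3), and deferring the collection of $J_{q^2}$-multiples until the very end --- is what keeps the computation compact.
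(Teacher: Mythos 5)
Your proposal is correct and follows essentially the same route as the paper: the same decomposition $N_\alpha=I_{q+1}\otimes(J_{q^2}-I_q\otimes J_q)+\sum_{a\in\mathbb{F}_q}P_a\otimes C_{a,\alpha}R$, the same reduction of $C_{a,\alpha}RC_{b,\beta}R$ to $C_{a,\alpha}C_{b,-\beta}$ via Lemma~\ref{lem:1}(5) and $R^2=I$, and the same case split $a=b$ versus $a\neq b$ handled by Lemma~\ref{lem:1}(2),(3),(6) together with $P_a^2=I_{q+1}$ and Lemma~\ref{lem:1}(1). Your only departures are cosmetic: in (1) you verify $C_{a,\alpha}^\top=C_{a,-\alpha}$ explicitly where the paper merely asserts that $C_{a,\alpha}R$ is symmetric, and in (2) you keep $M=(J_q-I_q)\otimes J_q$ symbolic and collect scalars via $M^2+qM=q(q-1)J_{q^2}$, whereas the paper distributes these terms immediately---both correctly yield $(q^2-4q+3)I_{q+1}\otimes J_{q^2}+3(q-1)J_{(q+1)q^2}$.
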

\begin{proof}
(1): It follows from the properties that the matrices $P_a$ and $C_{a,\alpha}R$ are symmetric for $a\in\mathbb{F}_q$ and $\alpha\in\mathbb{F}_q$. 

(2): We use Lemma~\ref{lem:1} to obtain:  
\begin{align*}
N_{\alpha} N_{\beta} 
&=(I_{q+1}\otimes (J_{q^2}-I_q\otimes J_q)+\sum_{a\in \mathbb{F}_q} P_a\otimes C_{a,\alpha}R)(I_{q+1}\otimes (J_{q^2}-I_q\otimes J_q)+\sum_{b\in \mathbb{F}_q} P_b\otimes C_{b,\beta}R)\\
&=I_{q+1}\otimes(qI_q\otimes J_q+q(q-2)J_{q^2})+\sum_{a\in \mathbb{F}_q} P_a\otimes ((J_{q^2}-I_q\otimes J_q)C_{a,\alpha}R+C_{a,\beta}R(J_{q^2}-I_q\otimes J_q))\\
&\quad +\sum_{a,b\in \mathbb{F}_q} P_{a}P_{b}\otimes C_{a,\alpha}R C_{b,\beta} R\\
&=I_{q+1}\otimes(qI_q\otimes J_q+q(q-2)J_{q^2})+\sum_{a\in \mathbb{F}_q} P_a\otimes 2(q-1)J_{q^2} +\sum_{a,b\in \mathbb{F}_q} P_{a}P_{b}\otimes C_{a,\alpha}C_{b,-\beta}R^2\\
&=I_{q+1}\otimes(qI_q\otimes J_q+q(q-2)J_{q^2})+2(q-1)(J_{q+1}-I_{q+1})\otimes J_{q^2} +\sum_{a,b\in \mathbb{F}_q} P_{a}P_{b}\otimes C_{a,\alpha}C_{b,-\beta}\\
&=q I_{q+1}\otimes I_q\otimes J_q+(q^2-4q+2) I_{q+1}\otimes J_{q^2}+2(q-1)J_{q+1}\otimes J_{q^2} +\sum_{a,b\in \mathbb{F}_q} P_{a}P_{b}\otimes C_{a,\alpha}C_{b,-\beta}.
\end{align*}
The third term of the above is 
\begin{align*}
&\sum_{a\in\mathbb{F}_q} P_{a}^2\otimes C_{a,\alpha}C_{a,-\beta}+\sum_{a,b\in\mathbb{F}_q,a\neq b} P_{a}P_{b}\otimes C_{a,\alpha}C_{b,-\beta}\\
&\sum_{a\in\mathbb{F}_q} I_{q+1}\otimes q C_{a,\alpha-\beta}+\sum_{a,b\in\mathbb{F}_q,a\neq b} P_{a}P_{b}\otimes J_{q^2}\\
&=q I_{q+1}\otimes (q I_q\otimes \phi(\alpha+\beta)+(J_q-I_q)\otimes J_q)+(q-1)(J_{q+1}-I_{q+1})\otimes J_{q^2}\\
&=q^2 I_{q+1}\otimes I_q\otimes \phi(\alpha-\beta)+I_{q+1}\otimes J_{q^2}-qI_{q+1}\otimes I_q\otimes J_q+(q-1)J_{(q+1)q^2}. 
\end{align*}
Therefore 
\begin{align*}
N_{\alpha} N_{\beta} 
&=q^2 I_{q+1}\otimes I_q\otimes \phi(\alpha-\beta)+(q^2-4q+3) I_{q+1}\otimes J_{q^2}+3(q-1)J_{(q+1)q^2}. \qedhere
\end{align*}
\end{proof}

We define $(0,1)$-matrices $A_{\alpha,i}$ ($\alpha\in\mathbb{F}_q,i\in\{0,1\}$) and $A_2$ as 
\begin{align*}
A_{\alpha,0}&=I_{q(q+1)}\otimes \phi(\alpha),\\
A_{\alpha,1}&=N_{\alpha}-I_{q+1}\otimes (J_{q^2}-I_q\otimes J_q),\\
A_{2}&=I_{q+1}\otimes (J_{q^2}-I_q\otimes J_q).
\end{align*}
Note that 
$A_{0,0}=I_{(q+2)q^2}$.  
\begin{theorem}\label{thm:as}
The set of matrices $\{A_{\alpha,0},A_{\alpha,1},A_{2}\mid \alpha\in\mathbb{F}_q\}$ forms a non-commutative association scheme.
\end{theorem}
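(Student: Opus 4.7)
My plan is to verify the four defining axioms of a non-commutative association scheme for the family $\{A_{\alpha,0}, A_{\alpha,1}, A_2 \mid \alpha\in\mathbb{F}_q\}$, and then exhibit a pair of non-commuting matrices.

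Axioms (i)--(iii) are short. Axiom (i) is immediate from $A_{0,0} = I_{q(q+1)}\otimes \phi(0) = I_{(q+1)q^2}$. For (ii), $\sum_\alpha \phi(\alpha) = J_q$ because $\phi$ is a faithful permutation representation of the abelian group $\mathbb{F}_q$, giving $\sum_\alpha A_{\alpha,0} = I_{q+1}\otimes I_q\otimes J_q$; the block form of $C_{a,\alpha}$ yields $\sum_\alpha C_{a,\alpha} = J_{q^2}$, which together with $\sum_{a\in\mathbb{F}_q}P_a = J_{q+1}-I_{q+1}$ gives $\sum_\alpha A_{\alpha,1} = (J_{q+1}-I_{q+1})\otimes J_{q^2}$; adding $A_2$ collapses the three parts to $J_{(q+1)q^2}$. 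For (iii), $A_{\alpha,0}^\top = A_{-\alpha,0}$, $A_2$ is manifestly symmetric, and $A_{\alpha,1}=N_\alpha - A_2$ is symmetric by Theorem~\ref{thm:1}(1).

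The bulk of the argument is axiom (iv). The trivial products are $A_{\alpha,0}A_{\beta,0} = A_{\alpha+\beta,0}$ (since $\phi$ is a homomorphism) and $A_{\alpha,0}A_2 = A_2 A_{\alpha,0} = A_2$ (because $\phi(\alpha)$ commutes with both $I_q\otimes J_q$ and $J_{q^2}$ as a permutation matrix). A direct computation shows $(I_q\otimes\phi(\alpha))C_{a,\beta} = C_{a,\alpha+\beta}$, from which $A_{\alpha,0}N_\beta = N_{\alpha+\beta}$ and hence $A_{\alpha,0}A_{\beta,1} = A_{\alpha+\beta,1}$; transposition then gives $A_{\alpha,1}A_{\beta,0} = A_{\alpha-\beta,1}$. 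Expanding $(J_{q^2}-I_q\otimes J_q)^2 = q(q-2)J_{q^2} + q\, I_q\otimes J_q$ shows that $A_2^2$ is a linear combination of $A_2$ and $\sum_\gamma A_{\gamma,0}$. The key product $A_{\alpha,1}A_{\beta,1}$ is expanded as $(N_\alpha-A_2)(N_\beta-A_2)$: Theorem~\ref{thm:1}(2) supplies $N_\alpha N_\beta$, while Lemma~\ref{lem:1}(4) reduces $A_2 N_\beta = N_\beta A_2 = A_2^2 + (q-1)\sum_\gamma A_{\gamma,1}$. Combining these and rewriting $I_{q+1}\otimes J_{q^2} = \sum_\gamma A_{\gamma,0} + A_2$ and $J_{(q+1)q^2} = \sum_\gamma A_{\gamma,0} + \sum_\gamma A_{\gamma,1} + A_2$ expresses $A_{\alpha,1}A_{\beta,1}$ as a linear combination of basis matrices.

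Finally, the identities $A_{\alpha,0}A_{\beta,1} = A_{\alpha+\beta,1}$ and $A_{\beta,1}A_{\alpha,0} = A_{\beta-\alpha,1}$ disagree whenever $2\alpha\neq 0$, which holds for any $\alpha\neq 0$ since $q$ is odd, establishing non-commutativity. The main obstacle is the bookkeeping in the $A_{\alpha,1}A_{\beta,1}$ expansion, since one must correctly track which terms originate in $N_\alpha N_\beta$ versus $A_2 N_\beta$ and translate every intermediate expression into the basis $\{A_{\gamma,0},A_{\gamma,1},A_2\}$.
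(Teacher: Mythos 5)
Your proposal is correct, and its overall architecture is the same as the paper's: check (i)--(iii) directly, then establish (iv) by computing every pairwise product via Lemma~\ref{lem:1} and Theorem~\ref{thm:1}, expanding $A_{\alpha,1}A_{\beta,1}=(N_\alpha-A_2)(N_\beta-A_2)$ exactly as the paper does. Two places where you diverge from the printed proof deserve comment, and both are in your favor. First, the paper asserts $A_{\alpha,0}A_{\beta,1}=A_{\beta,1}A_{\alpha,0}=A_{\alpha+\beta,1}$, while you obtain $A_{\beta,1}A_{\alpha,0}=A_{\beta-\alpha,1}$; your version is the correct one. Indeed, applying Lemma~\ref{lem:1}(5) twice, together with the fact that right multiplication by $I_q\otimes\phi(\alpha)$ sends $C_{a,\gamma}$ to $C_{a,\gamma+\alpha}$, gives
\[
C_{a,\beta}R\,(I_q\otimes\phi(\alpha)) \;=\; R\,C_{a,-\beta}\,(I_q\otimes\phi(\alpha)) \;=\; R\,C_{a,\alpha-\beta} \;=\; C_{a,\beta-\alpha}\,R,
\]
so $A_{\beta,1}A_{\alpha,0}=\sum_{a}P_a\otimes C_{a,\beta-\alpha}R=A_{\beta-\alpha,1}$. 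This matches the BGW analogue $A_{l,1}A_{l',0}=A_{l-l',1}$ in Theorem~\ref{thm:as11}, and it is what the Wedderburn decomposition requires: with the paper's identity, the relation in Lemma~\ref{lem:f}(2) would give $E_{1,2}^{(\alpha)}E_{2,2}^{(\alpha)}=O$ instead of $E_{1,2}^{(\alpha)}$, breaking the matrix-unit relations of Theorem~\ref{thm:pi} (the corrected relation is $F_{\alpha,1}F_{\beta,0}=q\delta_{\alpha,-\beta}F_{\alpha,1}$). Second, your final step actually proves \emph{non-commutativity} --- $A_{\alpha+\beta,1}\neq A_{\beta-\alpha,1}$ whenever $2\alpha\neq 0$, which holds for all $\alpha\neq 0$ since $q$ is odd --- whereas the paper's proof only establishes closure and never addresses the non-commutativity claimed in the statement (and, with its erroneous mixed product, would have to extract it from $A_{\alpha,1}A_{\beta,1}\neq A_{\beta,1}A_{\alpha,1}$ instead). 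Two trifles: $\sum_{\alpha}\phi(\alpha)=J_q$ holds because $\phi$ is the \emph{regular} representation, not merely a faithful one; and note that the paper's displayed identity $A_{\alpha,1}A_2=\sum_{\alpha\in\mathbb{F}_q}A_{\alpha,1}$ is missing the factor $q-1$, which you correctly carry (as does the paper's own final expansion).
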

\begin{proof}
By the definition of $N_{\alpha}$,  $A_{\alpha,1}$'s are non-zero $(0,1)$-matrices such that $\sum_{\alpha\in\mathbb{F}_q}(A_{\alpha,0}+A_{\alpha,1})+A_{2}=J_{(q+1)q^2}$. 
Each of $A_{\alpha,1}$ and $A_2$ is symmetric, and $A_{\alpha,0}^\top=A_{-\alpha,0}$.  
We are now going to show that $\mathcal{A}:=\text{span}_{\mathbb{C}}\{A_{\alpha,i},A_2 \mid \alpha\in\mathbb{F}_q,i\in\{0,1\}\}$ is closed under the matrix multiplication. 
For $\alpha,\beta\in\mathbb{F}_q$, the following are easy to see: 
\begin{align*}
A_{\alpha,0}A_{\beta,0}&=A_{\alpha+\beta,0},\\
A_{\alpha,0}A_{\beta,1}&=A_{\beta,1}A_{\alpha,0}=A_{\alpha+\beta,1},\\
A_{\alpha,0}A_2&=A_2 A_{\alpha,0}=A_2,\\
A_2^2&=(q^2-2q)I_{q+1}\otimes J_{q^2}+q I_{q+1}\otimes I_q \otimes J_q.
\end{align*}
By Lemma~\ref{lem:1}(iv), 
\begin{align*}
A_{\alpha,1}A_2&=A_2 A_{\alpha,1}=\sum_{\alpha\in \mathbb{F}_q}A_{\alpha,1}. 
\end{align*}
Finally by Theorem~\ref{thm:1} (ii), 
\begin{align*}
A_{\alpha,1}A_{\beta,1}&=(N_{\alpha}-A_2)(N_{\beta}-A_2)\\
&=N_{\alpha}N_{\beta}-N_{\alpha}A-A_2 N_{\beta}+A_2^2\\
&=q^2 I_{q+1}\otimes I_q\otimes \phi(\alpha-\beta)+(q^2-4q+3) I_{q+1}\otimes J_{q^2}+3(q-1)J_{(q+1)q^2}\\
&\quad -2(q-1)\sum_{\alpha\in\mathbb{F}_q}A_{\alpha,1}-A_2^2\\
&=q^2 I_{q+1}\otimes I_q\otimes \phi(\alpha-\beta)+q I_{q+1}\otimes(J_q-I_q)\otimes J_q+(q-1)(J_{q+1}-I_{q+1})\otimes J_{q^2}.  
\end{align*} 
Therefore $\mathcal{A}$ is closed under the matrix multiplication. 
\end{proof}

We view $\mathbb{F}_{q}=\mathbb{Z}_p^m$ as the additive group. 
For $\alpha=(\alpha_1,\ldots,\alpha_m),\beta=(\beta_1,\ldots,\beta_m)\in\mathbb{Z}_p^m$, the inner product is defined by $\langle\alpha, \beta\rangle=\alpha_1\beta_1+\cdots+\alpha_m\beta_m$. 
For $\beta\in\mathbb{Z}_p^m$, the irreducible character denoted $\chi_{\beta}$ is $\chi_{\beta}(\alpha)=w^{\langle\alpha, \beta\rangle}$ where $\alpha\in\mathbb{Z}_p^m,w=e^{2\pi\sqrt{-1}/p}$. 
The \defn{character table} $K$ of the abelian group $\mathbb{Z}_p^m$ is a $p^m\times p^m$ matrix with rows and columns indexed by the elements of $\mathbb{Z}_p^m$ with $(\alpha,\beta)$-entry equal to $\chi_{\beta}(\alpha)$. 
Note that $\chi_{\beta}(\alpha)=\chi_{\alpha}(\beta)$. 
Then the Schur orthogonality relation shows that $K K^\top=p^mI_{p^m}$.

To describe the primitive idempotents,  let $F_{\alpha,i}$, $\alpha\in\mathbb{F}_q,i\in\{0,1\}$, be 
\begin{align*}
F_{\alpha,i}&=\sum_{\gamma\in\mathbb{F}_q} \chi_{\alpha}(\gamma)A_{\gamma,i}.  
\end{align*}
\begin{lemma}\label{lem:f}
Let $\alpha,\beta\in\mathbb{F}_q$. 
The following hold.
\begin{enumerate}
\item $F_{\alpha,0}F_{\beta,0}=q\delta_{\alpha,\beta} F_{\alpha,0}$.  
\item $F_{\alpha,0}F_{\beta,1}=F_{\alpha,1}F_{\beta,0}=q\delta_{\alpha,\beta} F_{\alpha,1}$.  
\item $F_{\alpha,1}F_{\beta,1}=q^3\delta_{\alpha,\beta} F_{\alpha,0}+\delta_{\alpha,0}\delta_{\beta,0}((q-1)q^2F_{0,1}+q^3 A_2)$.  
\item $F_{0,0}A_2=A_2 F_{0,0}=qI_{q+1}\otimes (J_{q^2}-I_q\otimes J_q)$. 
\item $F_{0,1}A_2=A_2 F_{0,1}=(q^2-q)F_{0,1}$. 
\item If $\alpha\in\mathbb{F}_q^*$, $F_{\alpha,0}A_2=A_2 F_{\alpha,0}=F_{\alpha,1}A_2=A_2 F_{\alpha,1}=O$. 
\end{enumerate}
\end{lemma}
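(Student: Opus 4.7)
The plan is to expand each $F_{\alpha,i}$ by its definition as a character-weighted sum over the $A_{\gamma,i}$'s, apply the explicit product formulas for $A_{\gamma,i}A_{\delta,j}$ and $A_{\gamma,i}A_2$ developed in the proof of Theorem~\ref{thm:as}, and collapse the resulting double sums using Schur orthogonality for the characters of the additive group $\mathbb{F}_q$.

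For (1) and (2), the rules $A_{\gamma,0}A_{\delta,0}=A_{\gamma+\delta,0}$ and $A_{\gamma,0}A_{\delta,1}=A_{\delta,1}A_{\gamma,0}=A_{\gamma+\delta,1}$ reduce each relevant product of $F$'s to a double sum of the shape $\sum_{\gamma,\delta}\chi_\alpha(\gamma)\chi_\beta(\delta)A_{\gamma+\delta,k}$ for the appropriate $k\in\{0,1\}$. After the substitution $\epsilon=\gamma+\delta$, the inner sum becomes $\chi_\beta(\epsilon)\sum_\gamma\chi_\alpha(\gamma)\overline{\chi_\beta(\gamma)}=q\delta_{\alpha,\beta}\chi_\alpha(\epsilon)$ by Schur orthogonality, and the outer sum over $\epsilon$ is recognised as $F_{\alpha,k}$.

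For (3), the analogous expansion rests on the identity
\begin{align*}
A_{\gamma,1}A_{\delta,1}=q^2A_{\gamma-\delta,0}+qA_2+(q-1)F_{0,1},
\end{align*}
which is obtained from Theorem~\ref{thm:as} after identifying $qI_{q+1}\otimes(J_q-I_q)\otimes J_q=qA_2$ and $(J_{q+1}-I_{q+1})\otimes J_{q^2}=\sum_\gamma A_{\gamma,1}=F_{0,1}$. The three summands contribute independently: the $A_{\gamma-\delta,0}$ piece, after reindexing $\epsilon=\gamma-\delta$ and one application of Schur orthogonality, gives the $q^3 F_{\alpha,0}$ term with the appropriate Kronecker delta; the $qA_2$ and $(q-1)F_{0,1}$ pieces, being independent of $\gamma,\delta$, produce decoupled sums $\sum_\gamma\chi_\alpha(\gamma)=q\delta_{\alpha,0}$ and $\sum_\delta\chi_\beta(\delta)=q\delta_{\beta,0}$ whose product supplies the remaining two summands.

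For (4)--(6), the plan is to first isolate the pair of preliminary identities
\begin{align*}
A_{\gamma,0}A_2=A_2\qquad\text{and}\qquad A_{\gamma,1}A_2=(q-1)F_{0,1},
\end{align*}
valid for every $\gamma\in\mathbb{F}_q$. The first is immediate from $\phi(\gamma)J_q=J_q$. The second, which I expect to be the main obstacle, requires expanding $A_{\gamma,1}=N_\gamma-A_2$, computing $N_\gamma A_2$ via Lemma~\ref{lem:1}(iv) together with the observation that $R=R_q\otimes R_q$ commutes with $J_{q^2}-I_q\otimes J_q$, and then subtracting $A_2^2$. Once these two identities are in hand, inserting them into the defining sums for $F_{\alpha,0}A_2$ and $F_{\alpha,1}A_2$ and invoking $\sum_\gamma\chi_\alpha(\gamma)=q\delta_{\alpha,0}$ yields (4), (5), and the left-hand equalities in (6) at a single stroke; symmetry of $A_2$ together with $A_{\gamma,i}A_2=A_2A_{\gamma,i}$ from Theorem~\ref{thm:as} then supplies the reversed products.
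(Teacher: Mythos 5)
Your overall route --- expand each $F_{\alpha,i}$ by definition, substitute the products of the $A$'s computed in the proof of Theorem~\ref{thm:as}, and collapse the double sums with Schur orthogonality --- is exactly the intended one: the paper gives no explicit proof of this lemma (as with Lemma~\ref{lem:F}, it is meant to be read off from the intersection numbers), and your treatment of (1), (4), (5) and (6) is correct. In particular you rightly supply the factor $q-1$ in $A_{\gamma,1}A_2=(q-1)F_{0,1}$, which is misprinted as $\sum_{\alpha\in\mathbb{F}_q}A_{\alpha,1}$ in the proof of Theorem~\ref{thm:as} (the next line there, with its coefficient $-2(q-1)$, confirms your version), your use of $R=R_q\otimes R_q$ commuting with $J_{q^2}-I_q\otimes J_q$ is sound, and passing to the reversed products via symmetry is fine.

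There is, however, a genuine error in your (2), which propagates into (3). The relation $A_{\delta,1}A_{\gamma,0}=A_{\gamma+\delta,1}$ that you quote from the proof of Theorem~\ref{thm:as} is false: since $R_q\phi(\gamma)=\phi(-\gamma)R_q$, one computes $C_{a,\delta}R\,(I_q\otimes\phi(\gamma))=C_{a,\delta-\gamma}R$, hence $A_{\delta,1}A_{\gamma,0}=A_{\delta-\gamma,1}$, in complete analogy with $A_{l,1}A_{l',0}=A_{l-l',1}$ in Theorem~\ref{thm:as11} --- indeed, were your relation true, all generators would commute and the scheme could not be non-commutative, contradicting Theorem~\ref{thm:as} itself. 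Consequently $F_{\alpha,0}F_{\beta,1}=q\delta_{\alpha,\beta}F_{\alpha,1}$ is correct, but $F_{\alpha,1}F_{\beta,0}=q\delta_{\alpha,-\beta}F_{\alpha,1}$, not $q\delta_{\alpha,\beta}F_{\alpha,1}$. Likewise in (3), your own (correct) identity $A_{\gamma,1}A_{\delta,1}=q^2A_{\gamma-\delta,0}+qA_2+(q-1)F_{0,1}$ forces, after the substitution $\epsilon=\gamma-\delta$, the inner sum $\sum_\gamma\chi_\alpha(\gamma)\chi_\beta(\gamma)=q\delta_{\alpha,-\beta}$, so the leading term is $q^3\delta_{\alpha,-\beta}F_{\alpha,0}$; your phrase ``the appropriate Kronecker delta'' conceals this clash with the stated $\delta_{\alpha,\beta}$. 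The resolution is that the lemma as printed carries the same misprint: the correct statements have $\delta_{\alpha,-\beta}$ in these two places, exactly as in the parallel Lemma~\ref{lem:F}, and these corrected versions are the ones actually needed later --- e.g.\ $E_{1,2}^{(\alpha)}E_{2,1}^{(\alpha)}=E_{1,1}^{(\alpha)}$ in Theorem~\ref{thm:pi} requires $F_{\alpha,1}F_{-\alpha,1}=q^3F_{\alpha,0}$, which your $\delta_{\alpha,\beta}$ version would render zero for $\alpha\neq 0$. A complete proof must therefore correct both the product rule and the statement rather than derive (2) and (3) as printed.
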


For $i\in\{0,1,2\}, j,k\in\{1,2\},\alpha\in\mathbb{F}_q^*$, let $E_{i},E_{j,k}^{(\alpha)}$ be 
\begin{align*}
E_0&=\frac{1}{(q+1)q^2}J_{(q+1)q^2},\\
E_1&=\frac{1}{(q+1)q^2}((q^2-1)F_{0,0}-(q+1)A_2),\\
E_2&=\frac{1}{(q+1)q^2}(q F_{0,0}-F_{0,1}+q A_2),\\
E_{1,1}^{(\alpha)}&=\frac{1}{q}F_{\alpha,0}, \quad 
E_{2,2}^{(\alpha)}=\frac{1}{q}F_{-\alpha,0}, \quad 
E_{1,2}^{(\alpha)}=\frac{1}{q^2}F_{\alpha,1},\quad  
E_{2,1}^{(\alpha)}=\frac{1}{q^2}F_{-\alpha,1}.
\end{align*}
\begin{theorem}\label{thm:pi}
Let $S$ be any subset of $\mathbb{F}_q^*$ such that $S\cup (-S)=\mathbb{F}_q^*$ and $S\cap(-S)=\emptyset$. 
The matrices $E_0,E_1,E_2,E_{1,1}^{(\alpha)},E_{1,2}^{(\alpha)},E_{2,1}^{(\alpha)},E_{2,2}^{(\alpha)}$, $\alpha\in S$, provide the Wedderburn decomposition of the adjacency algebra of the association scheme. 
\end{theorem}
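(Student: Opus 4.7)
The plan is to verify directly, using Lemma~\ref{lem:f}, that the listed matrices form a complete set of central idempotents and matrix units for the adjacency algebra $\mathcal{A}$, and thereby exhibit its Wedderburn decomposition. Concretely, I would check (i) that $E_0, E_1, E_2$ are pairwise orthogonal idempotents; (ii) that for each $\alpha \in S$ the four matrices $E_{i,j}^{(\alpha)}$ satisfy the matrix-unit relations $E_{i,j}^{(\alpha)}E_{i',j'}^{(\alpha)} = \delta_{j,i'}E_{i,j'}^{(\alpha)}$ together with the involution $(E_{i,j}^{(\alpha)})^{*} = E_{j,i}^{(\alpha)}$; (iii) that the $2 \times 2$ blocks for distinct $\alpha \in S$ annihilate each other and also annihilate $E_0, E_1, E_2$; and (iv) that the full family sums to the identity. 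Since the total count $3 + 4 \cdot (q-1)/2 = 2q+1$ matches $\dim \mathcal{A}$, these relations force the decomposition $\mathcal{A} \cong \mathbb{C}^{3} \oplus \mathrm{Mat}_{2}(\mathbb{C})^{\oplus (q-1)/2}$.

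For the three one-dimensional components, I would first extract from Lemma~\ref{lem:f} the closed multiplication table for the commutative subalgebra spanned by $F_{0,0}$, $F_{0,1}$, and $A_2$: namely $F_{0,0}^{2} = qF_{0,0}$, $F_{0,0}F_{0,1} = qF_{0,1}$, $F_{0,0}A_2 = qA_2$, $F_{0,1}A_2 = (q^2-q)F_{0,1}$, $F_{0,1}^{2} = q^{3}F_{0,0} + q^{2}(q-1)F_{0,1} + q^{3}A_2$, and $A_2^{2} = (q^2-q)F_{0,0} + (q^2-2q)A_2$ (the last obtained by expanding $A_2 = I_{q+1}\otimes(J_{q^2} - I_q\otimes J_q)$ and using $F_{0,0} = I_{q+1}\otimes I_q\otimes J_q$). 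Substituting the definitions of $E_0, E_1, E_2$ and collecting coefficients then yields $E_i E_j = \delta_{i,j} E_i$ for $i,j \in \{0,1,2\}$.

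For the $2 \times 2$ blocks, fix $\alpha \in S$. Lemma~\ref{lem:f}(1) gives $E_{1,1}^{(\alpha)}E_{1,1}^{(\alpha)} = \tfrac{1}{q^{2}}F_{\alpha,0}^{2} = \tfrac{1}{q}F_{\alpha,0} = E_{1,1}^{(\alpha)}$; parts (1)--(2) yield the remaining $E_{1,1}^{(\alpha)}E_{1,j}^{(\alpha)}$ and $E_{i,1}^{(\alpha)}E_{1,1}^{(\alpha)}$ identities; and part (3), whose second summand vanishes for $\alpha \neq 0$, gives $E_{1,2}^{(\alpha)}E_{2,1}^{(\alpha)} = \tfrac{1}{q^{4}}F_{\alpha,1}F_{-\alpha,1} = \tfrac{1}{q}F_{\alpha,0} = E_{1,1}^{(\alpha)}$, completing the matrix-unit relations. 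Mutual orthogonality of the $2\times 2$ blocks for distinct $\alpha, \alpha' \in S$ follows from the $\delta$-factors in Lemma~\ref{lem:f} (noting $\alpha \neq \pm\alpha'$ whenever $\alpha, \alpha' \in S$ are distinct), and orthogonality to $E_0, E_1, E_2$ follows from parts (1)--(3) for the $F_{0,\cdot}$ interactions and part (6) for the $A_2$ interactions, both of which vanish when $\alpha \in \mathbb{F}_q^{*}$. The involution condition reduces to $F_{\alpha,0}^{*} = F_{-\alpha,0}$ and $F_{\alpha,1}^{*} = F_{-\alpha,1}$, which hold because $A_{\gamma,0}^{\top} = A_{-\gamma,0}$, $A_{\gamma,1}^{\top} = A_{\gamma,1}$, and $\overline{\chi_{\alpha}(\gamma)} = \chi_{-\alpha}(\gamma)$.

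Finally, inverting the change of basis between $\{A_{\gamma,i}\}$ and $\{F_{\alpha,i}\}$ by Schur orthogonality for the character table of $\mathbb{Z}_p^m$ yields $E_0 + E_1 + E_2 + \sum_{\alpha \in S}(E_{1,1}^{(\alpha)} + E_{2,2}^{(\alpha)}) = I$, and the dimension count closes the argument. The main bookkeeping obstacle is the verification $E_i E_j = \delta_{i,j} E_i$ for the three one-dimensional pieces---especially $E_2^{2} = E_2$, where $F_{0,0}$, $F_{0,1}$, and $A_2$ all interact nontrivially and the full $3 \times 3$ multiplication table must be combined consistently; the $2 \times 2$ block identities follow almost immediately from the $\delta$-vanishings in Lemma~\ref{lem:f}.
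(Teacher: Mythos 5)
Your proposal takes essentially the same route as the paper: the paper's proof is precisely the verification you outline, namely that Lemma~\ref{lem:f}(1)--(3) give the matrix-unit relations among the $E_{i,j}^{(\alpha)}$, that parts (4)--(5) make $E_0,E_1,E_2$ mutually orthogonal idempotents, and that part (6) gives the orthogonality between the two families; your explicit multiplication table for the commutative span of $F_{0,0},F_{0,1},A_2$ (which is correct, including $A_2^2=(q^2-q)F_{0,0}+(q^2-2q)A_2$), the sum-to-identity, and the count $3+4\cdot\frac{q-1}{2}=2q+1=\dim\mathcal{A}$ simply spell out what the paper leaves implicit.

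Two remarks on details. First, there is a genuine (though easily repaired) slip in your involution check: from $\overline{\chi_{\alpha}(\gamma)}=\chi_{\alpha}(-\gamma)$ and $A_{\gamma,0}^{\top}=A_{-\gamma,0}$ the two sign changes cancel after reindexing $\gamma\mapsto-\gamma$, so $F_{\alpha,0}^{*}=F_{\alpha,0}$, \emph{not} $F_{-\alpha,0}$ as you assert. Fortunately the Hermitian identity is exactly what Higman's condition demands here, since $\bigl(E_{1,1}^{(\alpha)}\bigr)^{*}$ must equal $E_{1,1}^{(\alpha)}$; your stated relation would instead give $\bigl(E_{1,1}^{(\alpha)}\bigr)^{*}=E_{2,2}^{(\alpha)}$ and would contradict the matrix-unit involution you are verifying. (Your relation $F_{\alpha,1}^{*}=F_{-\alpha,1}$ is correct, since each $A_{\gamma,1}$ is symmetric.) Second, you silently use Lemma~\ref{lem:f} in the form $F_{\alpha,1}F_{\beta,1}=q^{3}\delta_{\alpha,-\beta}F_{\alpha,0}+\cdots$ and $F_{\alpha,1}F_{\beta,0}=q\,\delta_{\alpha,-\beta}F_{\alpha,1}$; as printed, parts (2)--(3) of the lemma carry $\delta_{\alpha,\beta}$, under which $F_{\alpha,1}F_{-\alpha,1}$ and $F_{\alpha,1}F_{-\alpha,0}$ would vanish for $\alpha\neq 0$ and your key steps $E_{1,2}^{(\alpha)}E_{2,1}^{(\alpha)}=E_{1,1}^{(\alpha)}$ and $E_{1,2}^{(\alpha)}E_{2,2}^{(\alpha)}=E_{1,2}^{(\alpha)}$ would fail. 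The $\delta_{\alpha,-\beta}$ form is the correct one (compare Lemma~\ref{lem:F} in the BGW section, and note that $A_{\alpha,1}A_{\gamma,0}=A_{\alpha-\gamma,1}$ because conjugating past $R$ sends $\phi(\gamma)$ to $\phi(-\gamma)$, via Lemma~\ref{lem:1}(5)), so this is a typo in the printed lemma that your usage tacitly corrects rather than an error on your side; it would be worth stating the corrected products explicitly before relying on them.
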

\begin{proof}
By Lemma~\ref{lem:f}(i)--(iii), the matrices $E_{i,j}^{(\alpha)}$ satisfy $E_{i,j}^{(\alpha)}E_{j,k}^{(\beta)}=\delta_{\alpha,\beta}\delta_{j,k}E_{i,k}^{(\alpha)}$. 
Also from Lemma~\ref{lem:f}(iv) and (v), it follows that $E_0,E_1,E_2$ are mutually orthogonal idempotents. 
Finally the orthogonality between $E_i$ and $E_{j,k}^{(\alpha)}$ follows from Lemma~\ref{lem:f}(vi).
\end{proof}

\begin{remark}
The adjacency algebra of the association scheme in Theorem~\ref{thm:as} is isomorphic to $\oplus_{k=1}^{3+(q-1)/2}\text{Mat}_{d_k}(\mathbb{C})$ where $(d_k)_{k=1}^{3+(q-1)/2}=(1,1,1,2,\ldots,2)$
 with  
\begin{align*}
Q_1&=\begin{pmatrix}\chi_0\\ \chi_{0}\\ 1\end{pmatrix}, 
Q_2=\begin{pmatrix}(q^2-1) \chi_{0}\\ \bm{0} \\ -(q+1)\end{pmatrix},
Q_3=\begin{pmatrix}q \chi_0\\ -\chi_0 \\ q\end{pmatrix},\\
Q_{\alpha}&=(q+1)\begin{pmatrix}q\chi_{\alpha} & \bm{0} & q\chi_{-\alpha} & \bm{0} \\ \bm{0} & \chi_{\alpha}& \bm{0} & \chi_{-\alpha} \\ 0 & 0 & 0& 0\end{pmatrix},
\end{align*} 
for $\alpha\in S$, where $\bm{0}$ denotes the column zero vector. 
\end{remark}

As corollaries of Theorem~\ref{thm:as}, we have the following. 
\begin{corollary}
Let $T$ be the character table of the non-commutative association scheme in Theorem~\ref{thm:as}.
Then 
\begin{align*}
T=\begin{pmatrix} 
\chi_0^\top & q^2\chi_0^\top & q^2-q \\
\chi_0^\top & -q^2\chi_0^\top & -\frac{q^2-q}{q+1} \\
\chi_0^\top & -q\chi_0^\top & q^2-q \\
\chi_{\alpha}^\top+\chi_{-\alpha}^\top & \bm{0}^\top & 0
\end{pmatrix}. 
\end{align*}
\end{corollary}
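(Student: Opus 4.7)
The plan is to invoke Proposition~\ref{prop:ct} and read the entries of $T$ directly off the second eigenmatrices $Q_k$ compiled in the remark following Theorem~\ref{thm:pi}. Combining Proposition~\ref{prop:ct}(1) and~(2), the $(k,l)$-entry of the character table takes the form
\begin{align*}
T_{k,l}=m_k\sum_{i=1}^{d_k}p_{(i,i),l}^{(k)}=\sum_{i=1}^{d_k}v_l\,\overline{q^{(k)}_{l,(i,i)}},
\end{align*}
so the proof reduces to tabulating the multiplicities $m_k$, the valencies $v_l$, and the diagonal columns of each $Q_k$.

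For the multiplicities I would use Lemma~\ref{lem:m}(2) in the form $m_k=\mathrm{tr}(E_{i,i}^{(k)})$. Since $\mathrm{tr}(\phi(\gamma))=q\,\delta_{\gamma,0}$ we have $\mathrm{tr}(F_{\alpha,0})=q^2(q+1)$, while $\mathrm{tr}(F_{\alpha,1})=\mathrm{tr}(A_2)=0$. Substituting into the expressions for $E_0,E_1,E_2,E_{1,1}^{(\alpha)}$ given before Theorem~\ref{thm:pi} yields $m_1=1$, $m_2=q^2-1$, $m_3=q$ and $m_{k(\alpha)}=q(q+1)$ for each $\alpha\in S$. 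The valencies $v_{(\gamma,0)}=1$, $v_{(\gamma,1)}=q^2$ and $v_2=q^2-q$ are immediate from the block descriptions of $A_{\gamma,0}$, $A_{\gamma,1}=N_\gamma-A_2$ and $A_2$.

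With these data in hand, the first three rows of $T$ correspond to the one-dimensional blocks and follow by direct substitution of the columns of $Q_1,Q_2,Q_3$, since only the single diagonal index $(1,1)$ contributes. For each two-dimensional block indexed by $\alpha\in S$ I would add the two diagonal columns $(1,1)$ and $(2,2)$ of $Q_\alpha$; according to the remark these are supported on the $A_{\gamma,0}$ rows with values $q(q+1)\chi_\alpha(\gamma)$ and $q(q+1)\chi_{-\alpha}(\gamma)$ respectively, and after multiplication by $v_{(\gamma,0)}=1$ and division by $m_{k(\alpha)}=q(q+1)$ the resulting row is $(\chi_\alpha^\top+\chi_{-\alpha}^\top,\bm{0}^\top,0)$. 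The argument is otherwise mechanical; the only delicate point is keeping careful track of the lexicographic ordering of $(i,j)$ in the columns of $Q_\alpha$, so that the two diagonal columns are identified correctly and combine into the conjugate-character sum $\chi_\alpha+\chi_{-\alpha}$.
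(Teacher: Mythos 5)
Your route is the paper's route: the paper's entire proof is ``Follows from Proposition~\ref{prop:ct}'' (its citation of Theorem~\ref{thm:as11} is a typo for Theorem~\ref{thm:as}) together with the $Q_k$ listed in the remark after Theorem~\ref{thm:pi}, and your multiplicities $m_1=1$, $m_2=q^2-1$, $m_3=q$, $m_{k(\alpha)}=q(q+1)$ and valencies $1$, $q^2$, $q^2-q$ are all correct. The gap is that the ``direct substitution'' for the second row was never actually executed, and executing it \emph{refutes} the printed row rather than proving it. The middle block of $Q_2=((q^2-1)\chi_0;\bm{0};-(q+1))$ is $\bm{0}$, so every normalization of the trace formula forces $T_{2,(\gamma,1)}=v_{(\gamma,1)}\cdot 0=0$, not $-q^2$; and the $A_2$-entry comes out as $v_2\overline{q_2}/m_2=(q^2-q)(-(q+1))/(q^2-1)=-q$ in the character normalization (or $-q(q^2-1)$ with your displayed formula), never $-\tfrac{q^2-q}{q+1}$. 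Indeed $-\tfrac{q^2-q}{q+1}$ is a non-integral rational for every odd prime power $q$ (it is integral only if $q+1\mid 2$), so it cannot be a linear-character value on integral matrices, and the printed row violates $\mathrm{tr}\,\varphi_2(J_{(q+1)q^2})=0$, since $q+q(-q^2)-\tfrac{q^2-q}{q+1}\neq 0$, whereas the correct row $(\chi_0^\top,\bm{0}^\top,-q)$ gives $q+0-q=0$. You can confirm this directly: $E_1=\tfrac{1}{q^2}\,I_{q+1}\otimes(qI_q-J_q)\otimes J_q$, and using $A_{\gamma,1}F_{0,0}=F_{0,1}$ together with $A_{\gamma,1}A_2=(q-1)F_{0,1}$ (consistent with Lemma~\ref{lem:f}(5); note the proof of Theorem~\ref{thm:as} drops this factor $q-1$) yields $A_{\gamma,1}E_1=0$ and $A_2E_1=-qE_1$. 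So the corollary as printed is itself erroneous in its second row, and a proof that claims to reach it by substitution has necessarily skipped the substitution.

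There is also an internal inconsistency in your normalization that you should repair. Your displayed formula $T_{k,l}=m_k\sum_i p^{(k)}_{(i,i),l}$ is the trace of the isotypic projection of $A_l$ as a $v\times v$ matrix, which equals $m_k\,\mathrm{tr}(\varphi_k(A_l))$, i.e.\ $m_k$ times the character value in the sense of the Section~2 definition of $T$; with that formula even rows $3$ and the $\alpha$-rows of the printed table fail, by factors $q$ and $q(q+1)$ respectively. You silently divide by $m_{k(\alpha)}=q(q+1)$ when handling the two-dimensional blocks but not for rows $1$--$3$. The consistent formula is $T_{k,l}=\sum_i p^{(k)}_{(i,i),l}=\tfrac{v_l}{m_k}\sum_i\overline{q^{(k)}_{l,(i,i)}}$, under which rows $1$, $3$ and the $\alpha$-rows of the printed table check out and the second row must be corrected to $(\chi_0^\top,\bm{0}^\top,-q)$. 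Your identification of the two diagonal columns of $Q_\alpha$ as the ones supported on the $A_{\gamma,0}$ block is right, but note that the remark's columns are in fact listed in the order $(1,1),(1,2),(2,2),(2,1)$ rather than lexicographically (match them against $E^{(\alpha)}_{1,1}=\tfrac1qF_{\alpha,0}$, $E^{(\alpha)}_{2,2}=\tfrac1qF_{-\alpha,0}$), so the very ``delicate point'' you flagged would have misled you had you trusted the paper's stated lex convention.
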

\begin{proof}
Follows from Proposition~\ref{prop:ct} and Theorem~\ref{thm:as11}. 
\end{proof}
\begin{corollary}
Let $S$ be any subset of $\mathbb{F}_q^*$ such that $S\cup (-S)=\mathbb{F}_q^*$ and $S\cap(-S)=\emptyset$. 
The set of matrices $\{A_{0,0},A_{\alpha,0}+A_{-\alpha,0},A_{0,1},A_{\alpha,1}+A_{-\alpha,1},A_2\mid \alpha\in S\}$ forms a symmetric association scheme with the second eigenmatrix $Q$ 
\begin{align*}
Q=\begin{pmatrix}\chi_0 & (q^2-1)\chi_0 & q\chi_0 &  \frac{q(q+1)}{2}(\chi_{\alpha}+\chi_{-\alpha})  \\ 
\chi_0 & \bm{0} & -\chi_{0}    & \frac{q+1}{2}(\chi_{\alpha}+\chi_{-\alpha}) \\
1 & -(q+1) & q & 0
\end{pmatrix},
\end{align*} 
where $\alpha$ runs over the set $S$. 
\end{corollary}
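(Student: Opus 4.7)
The plan is to apply Theorem~\ref{thm:BM} to the non-commutative scheme of Theorem~\ref{thm:as}, following the template of the preceding BGW corollary. Hypothesis~(a) is immediate: $A_{0,0},A_{0,1},A_2$ are symmetric, and for $\alpha\in S$ the identity $A_{\alpha,0}^\top=I_{q(q+1)}\otimes\phi(\alpha)^\top=I_{q(q+1)}\otimes\phi(-\alpha)=A_{-\alpha,0}$ makes $A_{\alpha,0}+A_{-\alpha,0}$ self-transpose, while $A_{\alpha,1}+A_{-\alpha,1}$ is a sum of symmetric matrices by Theorem~\ref{thm:1}(1).

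For hypothesis~(b), the relevant partition of each index set $\{1,2\}^2$ (coming from a two-dimensional Wedderburn component indexed by $\alpha\in S$) groups $\{(1,1),(2,2)\}$ and $\{(1,2),(2,1)\}$, as dictated by the identities
\begin{align*}
F_{\alpha,0}+F_{-\alpha,0}&=q\,(E_{1,1}^{(\alpha)}+E_{2,2}^{(\alpha)}),\\
F_{\alpha,1}+F_{-\alpha,1}&=q^2(E_{1,2}^{(\alpha)}+E_{2,1}^{(\alpha)}).
\end{align*}
With the three trivial blocks from the one-dimensional components $E_0,E_1,E_2$, the total block count is $3+2\cdot(q-1)/2=q+2=e+1$. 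Inverting $F_{\alpha,l}=\sum_{\gamma}\chi_\alpha(\gamma)A_{\gamma,l}$ yields $p^{(\alpha)}_{(1,1),(\gamma,0)}=\chi_{-\alpha}(\gamma)$, $p^{(\alpha)}_{(2,2),(\gamma,0)}=\chi_\alpha(\gamma)$ and $p^{(\alpha)}_{(i,j),(\gamma,0)}=0$ for $i\neq j$, so the partial sum over $\Lambda_l=\{\gamma,-\gamma\}$ equals $\chi_\alpha(\gamma)+\chi_{-\alpha}(\gamma)$ on both diagonal indices and zero on both anti-diagonal indices; a symmetric computation handles the $A_{\cdot,1}$-type classes. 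The singleton cases are trivial.

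Finally, the primitive idempotents of the fusion algebra are $E_0,E_1,E_2$ together with $E_{1,1}^{(\alpha)}+E_{2,2}^{(\alpha)}$ and $E_{1,2}^{(\alpha)}+E_{2,1}^{(\alpha)}$ for $\alpha\in S$. The first three columns of $Q$ come directly from the one-dimensional blocks $Q_1,Q_2,Q_3$ of the remark after Theorem~\ref{thm:pi}, and the $\frac{q(q+1)}{2}(\chi_\beta+\chi_{-\beta})$ and $\frac{q+1}{2}(\chi_\beta+\chi_{-\beta})$ entries arise from the $1/q$ and $1/q^2$ normalizations in the definitions of $E_{i,j}^{(\alpha)}$ combined with the $q+1$ factor in $Q_\alpha$; the vanishing of the $A_2$-column on these rows is Lemma~\ref{lem:f}(vi). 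The main technical hurdle is this final bookkeeping step --- the structural verification in the first two paragraphs is direct from the preparatory results.
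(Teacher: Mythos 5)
Your overall route is the same as the paper's: its entire proof is the citation ``The result follows from Theorem~\ref{thm:BM}'', and you flesh that citation out. Your verification of hypothesis (a) and your computation that the fused sums $\sum_{l'\in\Lambda_l}p^{(\alpha)}_{(i,j),l'}$ are constant on $\{(1,1),(2,2)\}$ and on $\{(1,2),(2,1)\}$ are both correct. But there is a genuine gap in how you invoke the theorem: the partition of $\{1,2\}^2$ into $\{(1,1),(2,2)\}$ and $\{(1,2),(2,1)\}$ is \emph{not canonical} in the paper's sense. Canonical blocks must have the product form $I_a^{(k)}\times I_b^{(k)}$, so the only canonical partitions of $\{1,2\}^2$ are the single block ($f_k=1$, contributing $f_k^2=1$) or the four singletons ($f_k=2$, contributing $f_k^2=4$). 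Your count ``$3+2\cdot(q-1)/2=q+2=e+1$'' tacitly assigns $f_k^2=2$ to each two-dimensional component, which no integer $f_k$ achieves. Moreover neither canonical option works here: with $f_k=1$ the fused sums are not constant on the whole of $\{1,2\}^2$ (already for $\Lambda_0=\{(0,0)\}$ they equal $1$ at $(1,1),(2,2)$ and $0$ at $(1,2),(2,1)$), while $f_k=2$ gives $\sum_k f_k^2=2q+1\neq q+2$. So hypothesis (b) of Theorem~\ref{thm:BM} is not satisfied by any canonical partition, and the theorem cannot be applied verbatim; to make your argument rigorous you must rerun the \emph{proof} of Theorem~\ref{thm:BM} with your non-product blocks: setting $G_\alpha=E^{(\alpha)}_{1,1}+E^{(\alpha)}_{2,2}$ and $H_\alpha=E^{(\alpha)}_{1,2}+E^{(\alpha)}_{2,1}$, one has $G_\alpha H_\alpha=H_\alpha G_\alpha=H_\alpha$ and $H_\alpha^2=G_\alpha$, so the span of $E_0,E_1,E_2,G_\alpha,H_\alpha$ ($\alpha\in S$) is closed under multiplication; your constancy computation shows it contains every fused class sum; and its dimension $q+2=e+1$ forces equality with the fused span. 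That extra closure verification is exactly the missing step.

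Your final paragraph also contains an error that would derail the computation of $Q$: $E_0,E_1,E_2,G_\alpha,H_\alpha$ are \emph{not} the primitive idempotents of the fusion algebra. Indeed $H_\alpha$ is not even idempotent ($H_\alpha^2=G_\alpha$), and $G_\alpha$ is not primitive; the primitive idempotents attached to $\alpha\in S$ are $\tfrac12(G_\alpha\pm H_\alpha)$. This matters for the stated eigenmatrix: your recipe (the $1/q$ and $1/q^2$ normalizations together with the factor $q+1$) produces the columns of $G_\alpha$ and $H_\alpha$, namely $q(q+1)(\chi_\alpha+\chi_{-\alpha})$ paired with $\bm{0}$, and $\bm{0}$ paired with $(q+1)(\chi_\alpha+\chi_{-\alpha})$, not the stated entries. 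The entries $\tfrac{q(q+1)}{2}(\chi_\alpha+\chi_{-\alpha})$ and $\tfrac{q+1}{2}(\chi_\alpha+\chi_{-\alpha})$ arise precisely from the factor $\tfrac12$ in $\tfrac12(G_\alpha+H_\alpha)$, the companion idempotent $\tfrac12(G_\alpha-H_\alpha)$ giving the same first block with the second block negated. (This also explains a blemish in the paper's display, which lists only one column per $\alpha$.) So the structural part of your proposal is repairable along the paper's lines, but as written both the appeal to Theorem~\ref{thm:BM} and the derivation of $Q$ have concrete defects.
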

\begin{proof}
The result follows from Theorem~\ref{thm:BM}.
\end{proof}

\section*{Acknowledgement}
Hadi Kharaghani is supported by an NSERC Discovery Grant.  
Sho Suda is supported by JSPS KAKENHI Grant Number 15K21075. 
Th authors thank Keiji Ito for informing us errors in the previous version and the anonymous referee for valuable comments.

\end{document}